\newtheorem{theorem}{Theorem}
\newtheorem{lemma}[theorem]{Lemma}
\newtheorem{cor}[theorem]{Corollary}
\newtheorem*{defn}{Definition}
\newtheorem{q}{Question}
\newcommand\np{\vspace{1mm}}
\def\F{\mathcal{F}}
\def\A{\mathcal{A}}
\def\B{\mathcal{B}}
\def\P{\mu }
\def\a{\mathbf{a}}
\def\b{\mathbf{b}}
\DeclareMathOperator{\pos}{pos}
\DeclareMathOperator{\inv}{inv}
\DeclareMathOperator{\disp}{disp}
\def\power{\mathcal{P}(X)}
\begin{document}

\title{Correlation for permutations} 
\author{J. Robert Johnson\thanks{School of Mathematical Sciences, Queen Mary University of London, London E1 4NS, UK. 
E-mail: r.johnson@qmul.ac.uk.}
\and Imre Leader\thanks{DPMMS, University of Cambridge, Wilberforce Road, Cambridge CB3 0WB, UK. E-mail: i.leader@dpmms.cam.ac.uk.} \and Eoin Long\thanks{School of Mathematics, University of Birmingham, Edgbaston, Birmingham
B15 2TT, UK. E-mail: e.long@bham.ac.uk.}}
\maketitle

\begin{abstract}
In this note we investigate correlation inequalities for `up-sets' of permutations, in the spirit of the Harris--Kleitman inequality. We focus on two well-studied partial orders on $S_n$, giving rise to differing notions of up-sets. Our first result shows that, under the strong Bruhat order on $S_n$, up-sets are positively correlated (in the Harris--Kleitman sense). Thus, for example, for a (uniformly) random permutation $\pi$, the event that no point is displaced by more than a fixed distance $d$ and the event that $\pi$ is the product of at most $k$ adjacent transpositions are positively correlated. 
In contrast, under the weak Bruhat order we show that this completely fails: surprisingly, there are two up-sets each of measure $1/2$ whose intersection has arbitrarily small measure.

We also prove analogous correlation results for a class of non-uniform measures, which includes the Mallows measures. Some applications and open problems are discussed. 
\end{abstract}

\section{Introduction}

Let $X=\{1,2,\dots,n\}=[n]$. A family $\F \subset {\cal P}(X) = \{A: A \subset X\}$ is an \emph{up-set} if given $F \in \F$ and $F \subset G \subset X$ then $G \in \F$. The well-known and very useful Harris--Kleitman inequality \cite{harris,kleitman} guarantees that any two up-sets from ${\cal P}(X)$ are positively correlated. In other words, if $\A,\B\subset \power$ are both up-sets then
\[
\frac{|\A\cap\B|}{2^n}\geq \frac{|\A|}{2^n}\times\frac{|\B|}{2^n}.
\]
The result has been very influential, and was extended several times to cover more general contexts \cite{FKG,holley,AD}. However, for the most part, these results tend to focus on distributive lattices (such as $\power$) and it is natural to wonder whether correlation persists outside of this setting.\np

In this note we aim to explore analogues of the Harris--Kleitman inequality for sets of permutations.  There are two particularly natural notions for what it means for a family of permutations to be an up-set here, and the level of correlation that can be guaranteed in these settings turns out to differ greatly.\np

We write $S_n$ for the set of all permutations of $[n]$, which throughout the paper we regard as ordered $n$-tuples of distinct elements of $[n]$. That is, if $\a \in S_n$ then $\a =(a_1,\ldots, a_n)$ where $\{a_k\}_{k\in [n]} = [n]$. Given $\a \in S_n$ and $i\in [n]$, we write $\pos (\a, i)$ for the \emph{position} of $i$ in $\a $, i.e. $\pos (\a , i) = k$ if $a_k = i$. Given $1\leq i<j\leq n$, the pair $\{i,j\}$ is said to be an \emph{inversion} in $\a$ if $\pos (\a , i) > \pos (\a ,j)$. We will write $\inv (\a )$ for the set of all inversions in $\a$. A pair $\{i,j\} \in \inv (\a)$ is \emph{adjacent} in $\a $ if $\pos (\a ,i) = \pos (\a, j)+1$.

\begin{defn}
Given a family of permutations $\A\subset S_n$, we say that: 
\begin{enumerate}[(i)]
	\item $\A $ is a \emph{strong up-set} if given 
	$\a \in\A$, any permutation obtained 
	from $\a$ by swapping the elements in a pair 
	$\{i,j\} \in \inv (\a)$ is also in $\A$.
	\item 
	$\A $ is a \emph{weak up-set} if given 
	$\a \in\A$, any permutation obtained from $\a$ by
	swapping the elements in an adjacent pair 
	$\{i,j\} \in \inv (\a)$ 
	is also in $\A$.
\end{enumerate}
\end{defn}

We remark that both strong and weak up-sets have natural interpretations in the context of posets (see Chapter 2 of \cite{ABFB}). Given $\a , \b \in S_n$ write $\a \leq _s \b$ if $\b $ can be reached from $\a $ by repeatedly swapping inversions. We write $\a \leq _w \b$ if $\b$ can be reached from $\a$ by repeatedly swapping adjacent inversions. These relations give well-studied partial orders, known as the strong Bruhat order and the weak Bruhat order respectively. A (strong or weak) up-set is then simply a family which is closed upwards in the corresponding order\footnote{Note that this is in agreement with the usual notion of an up-set in ${\cal P}(X)$, starting from the poset $\big ({\cal P}(X), \subseteq \big )$.}.  

\begin{figure}[ht]
\centering
\includegraphics[scale=0.6]{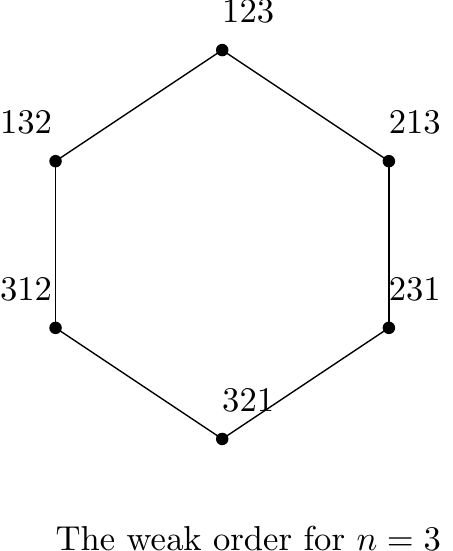}
\qquad
\includegraphics[scale=0.6]{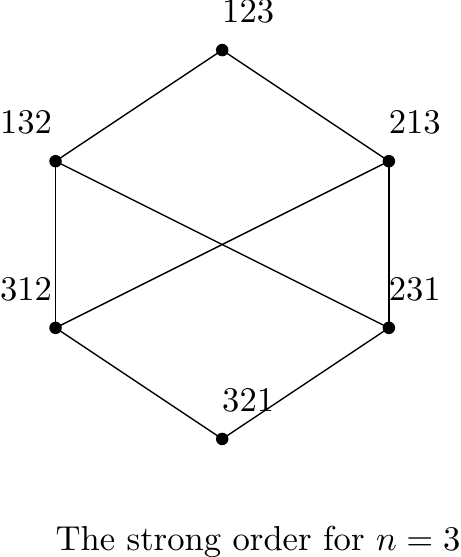}
\end{figure}

It is clear that every strong up-set is also a weak up-set, but the opposite relation is not true. For $i,j\in [n]$ let ${\cal U}_{ij} = \{\a \in S_n: i \mbox{ occurs before } j \mbox { in } \a\}$. If $i<j$, then ${\cal U}_{ij}$ is a weak up-set but not a strong up-set (except ${\cal U}_{1n}$). For example, when $n=3$, the family ${\cal U}_{12}=\{123,132,312\}$ is a weak up-set but not a strong up-set (because $213$ differs from $312$ by swapping $2$ and $3$ into increasing order but $213\not\in {\cal U}_{12}$).\np

Our first result is that strong up-sets are positively correlated in the sense of Harris--Kleitman. That is, if $\A , \B \subset S_n$ are strong up-sets then
\[
\frac{|\A\cap\B|}{n!}\geq \frac{|\A|}{n!}\times\frac{|\B |}{n!}.
\]
As we will also consider non-uniform measures, we phrase this in a more probabilistic way. We will say that a probability measure $\P$ on $S_n$ is \emph{positively associated} (for the strong order) if $\P(\A\cap\B)\geq\P(\A)\cdot \P(\B)$ for all strong up-sets $\A$ and $\B$. 

\begin{theorem}
\label{thm:strong}
The uniform measure $\P(\A)=\frac{|\A|}{n!}$ on $S_n$ is positively associated (for the strong order).
\end{theorem}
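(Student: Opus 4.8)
The plan is to prove this by induction on $n$, following the skeleton of the classical Harris--Kleitman argument but replacing its two-layer rearrangement step by Chebyshev's sum inequality applied across $n$ layers. The base case $n=1$ is trivial, so assume the statement holds for $S_{n-1}$. The natural coordinate to induct on is the position of the largest element: for $p\in[n]$ let $T_p=\{\a\in S_n:\pos(\a,n)=p\}$, and let $\phi_p\colon T_p\to S_{n-1}$ be the bijection that deletes the entry $n$ (which sits in position $p$) and relabels the remaining positions by $1,\dots,n-1$. Given a strong up-set $\A$, set $\A^{(p)}=\phi_p(\A\cap T_p)\subseteq S_{n-1}$ and define $a_p=|\A^{(p)}|/(n-1)!$; define $\B^{(p)}$ and $b_p$ analogously.

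First I would record two structural facts. Fact one: each $\A^{(p)}$ is itself a strong up-set of $S_{n-1}$, since swapping an inversion $\{i,j\}$ with $i,j<n$ fixes the position of $n$ and corresponds under $\phi_p$ to swapping the very same inversion in $S_{n-1}$. Fact two, which is the crux of the argument, is the interfibre monotonicity $\A^{(1)}\subseteq\A^{(2)}\subseteq\cdots\subseteq\A^{(n)}$. Here one uses the defining feature of the strong order: moving $n$ one step to the right swaps it with a smaller neighbour and so removes an inversion, hence is an upward move and keeps us inside $\A$; since deleting $n$ before and after this move yields the same element of $S_{n-1}$, every $\tau\in\A^{(p)}$ also lies in $\A^{(p+1)}$. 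In particular the sequences $(a_p)_{p\in[n]}$ and $(b_p)_{p\in[n]}$ are both non-decreasing.

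It then remains to assemble the pieces. As each fibre $T_p$ has size $(n-1)!$ and $\phi_p$ is a bijection taking $\A\cap T_p$, $\B\cap T_p$ and $\A\cap\B\cap T_p$ to $\A^{(p)}$, $\B^{(p)}$ and $\A^{(p)}\cap\B^{(p)}$ respectively, one has $\P(\A)=\frac1n\sum_p a_p$, $\P(\B)=\frac1n\sum_p b_p$ and $\P(\A\cap\B)=\frac1n\sum_p |\A^{(p)}\cap\B^{(p)}|/(n-1)!$. The induction hypothesis applied in each fibre gives $|\A^{(p)}\cap\B^{(p)}|/(n-1)!\geq a_pb_p$, and Chebyshev's sum inequality for the similarly ordered sequences $(a_p)$ and $(b_p)$ gives $\frac1n\sum_p a_pb_p\geq\bigl(\frac1n\sum_p a_p\bigr)\bigl(\frac1n\sum_p b_p\bigr)$. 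Chaining these two inequalities yields $\P(\A\cap\B)\geq\P(\A)\P(\B)$, closing the induction.

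I expect essentially all of the real content to sit in Fact two: it is exactly where the special structure of the strong Bruhat order (that the maximal entry may be swapped past any smaller entry lying to its right) is used, and it is precisely this monotonicity that makes the Chebyshev step applicable. Fact one and the measure bookkeeping are routine, and the choice to fibre by the position of $n$ rather than by a single two-element split is what lets the clean $n$-layer Chebyshev inequality replace the binary rearrangement inequality of the original Harris--Kleitman proof.
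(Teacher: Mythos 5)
Your proof is correct and takes essentially the same route as the paper: the paper derives Theorem \ref{thm:strong} from Theorem \ref{thm:strong-ig}, whose proof uses exactly your decomposition by the position of element $n$, with your Fact one and Fact two appearing as Lemma \ref{lem:slice} and your Chebyshev step appearing as the weighted inequality of Lemma \ref{lem:abt} (with weights $t_k = {\mathbb P}(X_n = k)$, which become $1/n$ in your uniform setting). The only difference is that the paper runs the identical induction for general independently generated measures rather than just the uniform one.
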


For weak up-sets the situation is more complicated. We saw that both ${\cal U}_{12}$ and ${\cal U}_{23}$ are weak up-sets and each has size $n!/2$. But ${\cal U}_{12}\cap {\cal U}_{23}$ is the family of permutations in which $1,2,3$ occur in ascending order. So, for $n \geq 3$ we have
\[
\frac{|{\cal U}_{12}\cap {\cal U}_{23}|}{n!}=\frac {1}{6}<\frac {1}{4}=\frac{|{\cal U}_{12}|}{n!}\times \frac{|{\cal U}_{23}|}{n!}.
\]
A plausible guess might be that every two up-sets ${\cal A}$ and ${\cal B}$ with size $n!/2$ achieve at least this level of correlation. Surprisingly, this turns out to be far from the truth; such an $\A $ and $\B $ can be almost disjoint.

\begin{theorem}
\label{thm:weak}
Let $0<\alpha , \beta<1$ be fixed. Then there are weak up-sets $\A, \B \subset S_n$ with $|\A| = \lfloor \alpha n!\rfloor $,  $|\B| = \lfloor \beta n!\rfloor $ and 
$|\A \cap \B | = \max (|\A | + |\B | - |S_n|,0) + o(n!)$.
\end{theorem}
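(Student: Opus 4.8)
The plan is to reduce the problem to finding two statistics on $S_n$ that are each non-decreasing up the weak order but are nonetheless almost perfectly \emph{anti}-correlated, and then to take $\A$ and $\B$ to be super-level sets of these statistics. The observation that makes weak up-sets easy to build is the following: for $i\neq j$ let $x_{ij}(\a)=1$ if $i$ precedes $j$ in $\a$ and $x_{ij}(\a)=0$ otherwise. Swapping an adjacent inversion $\{i,j\}$ changes the relative order of exactly one pair, flipping a single $x_{ij}$ from $0$ to $1$ and leaving all others fixed. Hence any function $\sum_{i<j}c_{ij}x_{ij}$ with all $c_{ij}\geq 0$ is non-decreasing up the weak order, so each of its super-level sets $\{\a : \sum c_{ij}x_{ij}(\a)\geq t\}$ is a weak up-set.

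With this in hand I would fix $m=\lfloor n/2\rfloor$, put $a=m-1$ and $b=n-m$, and consider
\[
f(\a)=\sum_{i<m}x_{im}(\a), \qquad g(\a)=\sum_{j>m}x_{mj}(\a),
\]
so $f$ counts the elements smaller than $m$ lying to the left of $m$, and $g$ counts the elements larger than $m$ lying to its right. Both are non-negative combinations of the $x$'s, so $\A=\{f\geq s\}$ and $\B=\{g\geq t\}$ are weak up-sets for any thresholds. Moreover $f$ equals the rank of $m$ among $\{1,\dots,m\}$ minus one, and the relative order of these $m$ elements is uniform, so $f$ is uniform on $\{0,\dots,a\}$; similarly $g$ is uniform on $\{0,\dots,b\}$. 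This uniformity of the marginals is what lets me choose $s,t$ so that $\P(\A)$ and $\P(\B)$ equal any prescribed $\alpha,\beta$ up to an error $O(1/n)$.

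The heart of the argument is that $f$ and $g$ have correlation tending to $-1$. Using the elementary covariances $\mathrm{Cov}(x_{im},x_{i'm})=\tfrac1{12}$ and $\mathrm{Cov}(x_{mj},x_{mj'})=\tfrac1{12}$ (a shared element in the same role) and, crucially, $\mathrm{Cov}(x_{im},x_{mj})=-\tfrac1{12}$ (the pivot $m$ appears in opposite roles), I get $\mathrm{Var}(f)=\tfrac{a(a+2)}{12}$, $\mathrm{Var}(g)=\tfrac{b(b+2)}{12}$ and $\mathrm{Cov}(f,g)=-\tfrac{ab}{12}$. Thus $\mathbb{E}[f/a+g/b]=1$ while
\[
\mathrm{Var}\!\left(\tfrac{f}{a}+\tfrac{g}{b}\right)=\frac{a+2}{12a}+\frac{b+2}{12b}-\frac16=\frac1{6a}+\frac1{6b}\longrightarrow 0 .
\]
By Chebyshev's inequality $f/a+g/b\to 1$ in probability, so the pair $(f/a,\,g/b)$ concentrates on the anti-diagonal of the unit square.

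Finally I would read off the intersection. Choosing $s\approx(1-\alpha)a$ and $t\approx(1-\beta)b$, on the event $\{|f/a+g/b-1|=o(1)\}$ membership in $\A$ forces $g/b\le\alpha+o(1)$, while membership in $\B$ forces $g/b\ge 1-\beta-o(1)$. If $\alpha+\beta<1$ these are incompatible, so $\P(\A\cap\B)=o(1)$; if $\alpha+\beta>1$ the constraints become $1-\alpha-o(1)\le f/a\le\beta+o(1)$, an interval of length $\alpha+\beta-1$ for the asymptotically uniform variable $f/a$, giving $\P(\A\cap\B)\to\alpha+\beta-1$. Either way $\P(\A\cap\B)=\max(\alpha+\beta-1,0)+o(1)$, which is the claim after multiplying by $n!$. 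To pin the sizes down to exactly $\lfloor\alpha n!\rfloor$ and $\lfloor\beta n!\rfloor$ I would pick $s$ with $|\{f\geq s+1\}|\le\lfloor\alpha n!\rfloor\le|\{f\geq s\}|$ and then grow $\{f\geq s+1\}$ by repeatedly adding a maximal element of the complement lying in the single level $\{f=s\}$; each such addition preserves the weak up-set property and raises the size by one, so the target is hit exactly. The resulting $\A$ lies between $\{f\geq s+1\}$ and $\{f\geq s\}$, hence differs from $\{f\geq s\}$ only inside $\{f=s\}$, a set of measure $1/m=o(1)$, so the leading-order computation is unaffected; the same is done for $\B$. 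The one genuinely creative step is the choice of $f$ and $g$: everything downstream (uniform marginals, the variance identity, the concentration) is routine, so the main obstacle is simply to find two \emph{up}-monotone statistics that are asymptotically perfectly negatively correlated, the trick being to let a single pivot element $m$ play opposite roles in the two statistics.
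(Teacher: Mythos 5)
Your proposal is correct and is essentially the paper's own argument: the paper constructs the same two pivot statistics (the rank of a distinguished element $m$ among $\{1,\dots,m\}$ and its co-rank among $\{m,\dots,n\}$, with $m\approx\frac{\alpha}{\alpha+\beta}n$ rather than $\lfloor n/2\rfloor$), takes their super-level sets as the weak up-sets $\A$ and $\B$, and derives the same squeeze on the intersection from a Chebyshev-type concentration estimate. The only cosmetic differences are that the paper runs the concentration through the position of $m$ (rather than your direct covariance computation showing $f/a+g/b\to 1$) and achieves the exact sizes by deleting minimal elements rather than adding maximal elements of the complement.
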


The correlation given in Theorem \ref{thm:weak} is (essentially) minimal, since \emph{any} two families $\A , \B \subset S_n$ satisfy $|\A \cap \B | \geq \max \big ( |\A| + |\B | - |S_n|, 0\big )$.\np

Theorem \ref{thm:weak} shows in quite a strong sense that the uniform measure on $S_n$ is not positively associated under the weak order. Our next result will prove positive association for a wider collection of measures under the strong order, giving a different generalisation of Theorem \ref{thm:strong}.\np

Before describing these measures, we first give an alternative representation of elements of $S_n$ (essentially the Lehmer encoding of permutations -- see Chapter 11.4 of \cite{L}). Given $\a \in S_n$ we can associate a vector ${\bf f}(\a) = (f_1,\ldots ,f_n) \in G_n:= [1]\times [2] \times \cdots [n]$, with 
\[
f_j := |\{i \in [j]: \pos (\a ,i) \leq \pos (\a ,j)\}|.
\] 
In other words, $f_j$ describes where element $j$ appears in the $n$-tuple $\a$ in relation to the elements from $[j]$. This gives a bijection between $S_n$ and  $G_n$, and our positively associated measures on $S_n$ are built from this connection.

\begin{defn}
Let $X_1,\ldots, X_n$ be independent random variables, where each $X_k$ takes values in $[k]$. The \emph{independently generated measure} $\mu $ defined by $\{X_k\}_{k\in [n]}$ is the following probability measure on $S_n$: given $\a \in S_n$ we have
	\begin{equation*}
		\mu ({\bf a}) 
				:= 
		\prod _{k\in [n]} {\mathbb P}\big (X_k = {\bf f}(\a)_k \big).
	\end{equation*}
We simply say that $\mu $ is independently generated if this holds for some such collection of $\{X_k\}_{k\in [n]}$.
\end{defn}

Our second positive result applies to independently generated measures.

\begin{theorem}
\label{thm:strong-ig}
Every independently generated probability measure on $S_n$ is positively associated.
\end{theorem}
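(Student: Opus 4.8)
The plan is to leverage the bijection $\mathbf{f}: S_n \to G_n = [1]\times[2]\times\cdots\times[n]$ to transport the problem onto a product lattice, where the original Harris--Kleitman inequality (or rather its four-function FKG generalisation) applies cleanly. First I would equip $G_n$ with the coordinatewise partial order, so that $\mathbf{g} \le \mathbf{g}'$ means $g_k \le g'_k$ for all $k$. The crucial geometric claim, which I would establish as a lemma, is that the bijection $\mathbf{f}$ is an \emph{order isomorphism} from $(S_n, \le_s)$ to $(G_n, \le)$: namely, swapping an inversion $\{i,j\}$ in $\a$ (moving up in the strong Bruhat order) should correspond exactly to increasing one coordinate of $\mathbf{f}(\a)$. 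I would verify this by analysing how the Lehmer coordinates $f_j = |\{i \in [j]: \pos(\a,i) \le \pos(\a,j)\}|$ change under swapping an inversion.

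Granting this order isomorphism, a strong up-set $\A \subset S_n$ maps to an up-set $\mathbf{f}(\A) \subset G_n$ in the coordinatewise order, and the independently generated measure $\mu$ becomes, by its very definition $\mu(\a) = \prod_k \mathbb{P}(X_k = \mathbf{f}(\a)_k)$, a product measure on $G_n$. So the theorem reduces to the statement that any product probability measure on a finite product of chains is positively associated with respect to the coordinatewise order. This is a standard consequence of the FKG inequality on a distributive lattice: $G_n$ is a product of chains, hence a distributive lattice, and a product measure trivially satisfies the log-supermodularity (FKG lattice) condition $\mu(x \vee y)\mu(x \wedge y) \ge \mu(x)\mu(y)$, since it factorises coordinatewise and each factor satisfies the condition with equality on a chain. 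I would either cite the FKG inequality directly or, to keep the note self-contained, give the short induction on $n$: condition on the last coordinate $X_n$, apply the inductive hypothesis on $G_{n-1}$ to the fibres (which are themselves up-sets), and then use the one-dimensional Chebyshev/FKG step that two monotone functions of a single real random variable are positively correlated.

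The main obstacle I expect is the order-isomorphism lemma, since it is the one genuinely combinatorial ingredient and the rest is machinery. The subtlety is that a single inversion swap of a \emph{non-adjacent} pair $\{i,j\}$ can alter several Lehmer coordinates at once (those indexed by values lying strictly between $i$ and $j$ in value and sitting between their positions), so I cannot simply claim one coordinate goes up and the rest stay fixed. I would therefore argue more carefully that moving up in $\le_s$ never \emph{decreases} any coordinate $f_k$ and strictly increases at least one; equivalently, that $\a \le_s \b$ holds if and only if $\mathbf{f}(\a) \le \mathbf{f}(\b)$ coordinatewise. Establishing the ``only if'' direction via the covering relations (single inversion swaps) and the ``if'' direction by exhibiting a monotone path in $\le_s$ realising any coordinatewise increase is where the real work lies; once this dictionary is in place, Theorem~\ref{thm:strong} (the uniform case) falls out as the special instance where each $X_k$ is uniform on $[k]$, and the general Theorem~\ref{thm:strong-ig} follows immediately from the product-measure FKG argument above.
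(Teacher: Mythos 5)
Your overall route --- transport the problem to the grid $G_n$ via the Lehmer encoding ${\bf f}$, note that an independently generated measure becomes a product measure there, and then apply FKG (or the standard conditioning-on-the-last-coordinate induction) on a product of chains --- is exactly the paper's second proof of this theorem. However, your ``crucial geometric claim'' is false: ${\bf f}$ is \emph{not} an order isomorphism from $(S_n,\leq_s)$ to $(G_n,\leq)$, and the half you propose to prove via covering relations --- that swapping an inversion never decreases any Lehmer coordinate --- fails. Concretely, take $\a=(3,1,2)\in S_3$ and swap the inversion $\{2,3\}$ to obtain $\b=(2,1,3)$, so that $\a\leq_s\b$. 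Then ${\bf f}(\a)=(1,2,1)$ but ${\bf f}(\b)=(1,1,3)$: the second coordinate has \emph{decreased}, so ${\bf f}(\a)$ and ${\bf f}(\b)$ are incomparable in the coordinatewise order. In general, when an inversion $\{i,j\}$ with $i<j$ is swapped, the coordinate $f_i$ drops whenever some element smaller than $i$ sits strictly between the two swapped positions. So the strong order strictly refines the grid order; the two do not coincide, and any attempt to prove your ``only if'' direction must break down.

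Fortunately, your reduction only needs the direction that is true: if ${\bf f}(\a)\leq{\bf f}(\b)$ coordinatewise then $\a\leq_s\b$. This alone shows that a strong up-set $\A$ has ${\bf f}(\A)$ a coordinatewise up-set in $G_n$ (if $\a\in\A$ and ${\bf f}(\a)\leq{\bf f}(\b)$, then $\a\leq_s\b$, whence $\b\in\A$), and the remainder of your argument --- a product measure satisfies $\P(\a\vee\b)\cdot\P(\a\wedge\b)\geq\P(\a)\cdot\P(\b)$ (in fact with equality), hence FKG gives positive association for up-sets in $G_n$ --- goes through unchanged. This one-directional implication is precisely Lemma \ref{lem:grid-strong} of the paper, proved there by checking that a grid covering relation corresponds to swapping a \emph{dominated} inversion; with it in place your proof becomes correct, and indeed it then establishes the paper's stronger Theorem \ref{thm:strong-grid}, which applies to the strictly larger class of grid up-sets. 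So delete the ``only if'' half of your lemma --- it is not only unprovable but unnecessary --- and the rest stands.
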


We note that the uniform measure on $S_n$ is independently generated, taking $X_k$ to simply be uniform on $[k]$. Thus Theorem \ref{thm:strong-ig} implies Theorem \ref{thm:strong}.\vspace{1mm}

We note that one special case of an independently generated measure is the Mallows measure \cite{mallows}. Recalling the definition of $\inv (\a )$ above, the Mallows measure with parameter $0<q\leq 1$ is defined by setting 
\[
\P(\a )\propto  q^{|\inv (\a )|}.
\]
That is, $\P(\a ) = \left(\sum_{\a \in S_n} q^{|\inv (\a )|}\right)^{-1} \cdot q^{|\inv(\a )|}$.\np

Our results in fact go beyond independently generated measures, and it turns out that here a key idea is a notion of up-set that sits `between' the weak and strong up-sets above. This notion, which we call `grid up-sets' (defined in Section 2), provides an environment that is suitable for FKG-like inequalities. This approach will allow us to strengthen Theorem \ref{thm:strong-ig} to apply to measures satisfying more general conditions.\np

Before closing the introduction, we note that while we have stated our results for up-sets, it is easy to obtain equivalent down-set versions of Theorems 1--3 (for example, see Chapter 19 of \cite{BB}). These follow by noting that a set $A$ in a partial order $(P,<)$ is an up-set if and only if $A^c = P \setminus A$ is a down-set. Indeed, if $\mu $ is a probability measure on $P$ and $\mu (A \cap B) \geq \mu (A) \cdot \mu (B)$ then we obtain the complementary inequality
\begin{align*}
	\mu (A^c \cap B^c) 
		= 
	1 - \mu (A) - \mu (B) + \mu (A \cap B) 
		& \geq 
	1 - \mu (A) - \mu (B) + \mu (A) \cdot 
	\mu (B)\\ 
	& = \mu (A^c) \cdot \mu (B^c).
\end{align*}

The plan of the paper is as follows. In Section 2 we prove our positive association results. Here we give a self-contained proof of Theorem \ref{thm:strong-ig}. We also introduce grid up-sets and use them to extend Theorem \ref{thm:strong-ig}. In Section 3 we prove Theorem \ref{thm:weak}, constructing weak up-sets with bad correlation properties. Section 4 gives some applications of our main results, to families of permutations defined with bounded `displacements', sequential domination properties, as well as to left-compressed set systems. Finally, in Section 5, we raise some questions and directions for further work.

\section{Correlation for strong up-sets}

In this section we will prove Theorem \ref{thm:strong-ig}. As noted in the introduction, the uniform case is an immediate corollary (Theorem \ref{thm:strong}). The proof will use induction on $n$. To relate a family of permutations of $[n]$ with a family of permutations of some smaller ground set, we `slice' according to position of element $n$. Given a family ${\cal A} \subset S_n$ and $k\in [n]$ let $\A _k \subset S_{n-1}$ denote those permutations obtained by deleting the appearance of element `$n$' from $\a \in \A $ with $\pos (\a ,n) = k$. That is: 
\begin{align*}
\A_k:= \big \{(a_1,a_2,\dots ,a_{n-1})\in S_{n-1} : (a_1,\dots , a_{k-1},n,a_k,\dots ,a_{n-1})\in\A \big \}.
\end{align*}
In the next simple lemma we collect two properties of the slice operation which will be useful later.

\begin{lemma}
\label{lem:slice}
If $\A\subset S_n$ is a strong up-set and the slices $\A_1,\A_2,\dots,\A_n\subset S_{n-1}$ are defined as above then:
\begin{enumerate}[(i)]
\item $\A_k$ is a strong up-set for all $k\in [n]$, and
\item $\A_1\subset\A_2\subset\A_3\dots\subset\A_n$.
\end{enumerate}
\end{lemma}

\begin{proof}
Part (i) is immediate. To see (ii), note that if $\a \in\A_k$ then we have $(a_1,\dots ,a_{k-1}, n, a_k,\dots ,a_{n-1})\in\A$. Now, as $\A $ is a strong up-set and $n>a_k$, the pair $\{a_k,n\} \in \inv (\a )$ and we find $(a_1,\dots ,a_k, n,a_{k+1},\dots , a_{n-1})\in\A$, giving $\a \in\A_{k+1}$.
\end{proof}

We will also need the following simple and standard arithmetic inequality, which will be used to relate the conditional probabilities of the slices in $S_{n-1}$ to probabilities in $S_n$. We provide a proof for completeness.

\begin{lemma}
\label{lem:abt}
Let $u_1,\dots,u_n,v_1,\dots,v_n,t_1,\dots,t_n\in [0,\infty )$ with $u_1\leq \ldots \leq u_n$, $v_1 \leq \ldots \leq v_n$ and $\sum _{k=1}^n t_k \leq 1$. Then
\[
\sum_{k=1}^n t_ku_kv_k\geq\left(\sum_{k=1}^n t_ku_k\right)\left(\sum_{k=1}^n t_kv_k\right).
\]
\end{lemma}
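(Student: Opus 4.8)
The plan is to recognise this as a weighted form of Chebyshev's sum inequality and to prove it by the standard symmetrisation trick, dealing with the relaxed hypothesis $\sum_k t_k \leq 1$ (rather than $=1$) only at the very end, where nonnegativity of all the quantities will make the relaxation work in our favour.

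First I would introduce the double sum
\[
S := \sum_{j=1}^n \sum_{k=1}^n t_j t_k (u_j - u_k)(v_j - v_k).
\]
The key observation is that since both $(u_k)$ and $(v_k)$ are nondecreasing, for every pair $(j,k)$ the factors $u_j - u_k$ and $v_j - v_k$ have the same sign, so each summand is nonnegative; as all the $t_k$ are nonnegative too, this gives $S \geq 0$.

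Next I would expand $S$. Writing $T := \sum_{k=1}^n t_k$ and multiplying out $(u_j - u_k)(v_j - v_k) = u_j v_j - u_j v_k - u_k v_j + u_k v_k$, the four resulting double sums factorise, yielding
\[
S = 2T \sum_{k=1}^n t_k u_k v_k - 2\left(\sum_{k=1}^n t_k u_k\right)\left(\sum_{k=1}^n t_k v_k\right).
\]
Combining this with $S \geq 0$ immediately gives the intermediate inequality
\[
T \sum_{k=1}^n t_k u_k v_k \geq \left(\sum_{k=1}^n t_k u_k\right)\left(\sum_{k=1}^n t_k v_k\right).
\]

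Finally, I would pass from this to the stated conclusion. Since every $t_k, u_k, v_k$ is nonnegative we have $\sum_{k=1}^n t_k u_k v_k \geq 0$, and as $T \leq 1$ this forces $\sum_{k=1}^n t_k u_k v_k \geq T \sum_{k=1}^n t_k u_k v_k$. Chaining the two displays then produces
\[
\sum_{k=1}^n t_k u_k v_k \geq T \sum_{k=1}^n t_k u_k v_k \geq \left(\sum_{k=1}^n t_k u_k\right)\left(\sum_{k=1}^n t_k v_k\right),
\]
which is exactly the claim. There is no genuine obstacle in this argument; the only point requiring a moment's care is precisely that the hypothesis reads $\sum_k t_k \leq 1$ instead of equality, and the computation above shows that nonnegativity turns this weaker assumption into a harmless one.
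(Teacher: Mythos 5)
Your proof is correct, but it follows a different route from the paper's. You use the classical symmetrisation trick behind Chebyshev's sum inequality: the double sum $S=\sum_{j,k} t_j t_k (u_j-u_k)(v_j-v_k)$ is nonnegative because similarly ordered sequences make each summand nonnegative, and expanding $S$ yields $T\sum_k t_k u_k v_k \geq \bigl(\sum_k t_k u_k\bigr)\bigl(\sum_k t_k v_k\bigr)$ with $T=\sum_k t_k$; you then dispose of the relaxed hypothesis $T\leq 1$ at the very end, using $\sum_k t_k u_k v_k \geq 0$. The paper instead writes $u_k=x_1+\dots+x_k$ and $v_k=y_1+\dots+y_k$ with nonnegative increments $x_i=u_i-u_{i-1}$, $y_j=v_j-v_{j-1}$, expands both sides as bilinear forms $\sum_{i,j} r_{i,j}x_i y_j$ and $\sum_{i,j} s_{i,j}x_i y_j$ with $r_{i,j}=\sum_{m\geq \max(i,j)} t_m$ and $s_{i,j}=\bigl(\sum_{m\geq i}t_m\bigr)\bigl(\sum_{m\geq j}t_m\bigr)$, and finishes by the termwise comparison $r_{i,j}\geq s_{i,j}$, which is where $\sum_k t_k\leq 1$ enters. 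A notable difference in bookkeeping: your intermediate inequality holds with no positivity assumption on the $u_k, v_k$ at all (only monotonicity and $t_k\geq 0$), with nonnegativity invoked solely to convert $T\leq 1$ into the stated conclusion; the paper's argument uses nonnegativity of $u_1,v_1$ up front (to get $x_1,y_1\geq 0$) and absorbs the sub-probability condition into the coefficient comparison. Both arguments are elementary and of comparable length; yours isolates more sharply where each hypothesis is needed, while the paper's gives a termwise domination of one bilinear expansion by the other.
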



\begin{proof}
For convenience set $u_0 = v_0 = 0$. Then, for all $k\in [n]$ set $x_k = u_k-u_{k-1}$ and $y_k = v_k - v_{k-1}$. 
Note that the conditions on $u_k$ and $v_k$ give $x_k, y_k\geq 0$. Now,
\[
\sum_{k=1}^n t_ku_kv_k=\sum_{k=1}^n t_k(x_1+\dots+x_k)(y_1+\dots+y_k)=\sum_{i,j} r_{i,j}x_iy_j,
\]
where
\[
r_{i,j}=\left\{\begin{array}{cl}
t_i+\dots+t_n & \text{ if } i\geq j,\\
t_j+\dots+t_n & \text{ if } i\leq j.\\
\end{array}\right.
\]
Similarly,
\[
\left(\sum_{k=1}^n t_ku_k\right)\left(\sum_{\ell =1}^n t_{\ell }v_{\ell }\right)
	=
\left(\sum_{k=1}^n t_k \big (\sum _{i = 1}^kx_{i }\big )\right)\left(\sum_{\ell =1}^n t_{\ell }\big (\sum _{j = 1}^{\ell }y_{j}\big )\right)=\sum_{i,j} s_{i,j}x_iy_j,
\]
where $s_{i,j}=(t_i+\dots+t_n)(t_j+\dots+t_n)$.
As $t_k\geq 0$ for all $k\in [n]$ and $\sum _{k=1}^n t_k \leq 1$, we see that $r_{i,j}\geq s_{i,j}$ for all $i,j$ and the result follows.
\end{proof}

\begin{proof}
[Proof of Theorem \ref{thm:strong-ig}]
We wish to show that if $\mu $ is an independently generated probability measure on $S_n$ and ${\cal A}, {\cal B} \subset S_n$ are strong up-sets in $S_n$ then $\mu ({\cal A} \cap {\cal B}) \geq \mu ({\cal A}) \cdot \mu ({\cal B})$. We will prove this by induction on $n$. The statement is trivial for $n=1$. Assuming that the statement holds for $n-1$, we will prove it for $n$.

To begin, note that as $\mu $ is independently generated on $S_n$, it is defined by independent random variables $\{X_i\}_{i\in [n]}$. Take $\nu $ to denote the independently generated measure on $S_{n-1}$ defined by the independent random variables $\{X_i\}_{i\in [n-1]}$. By definition of $\mu $, if $\a = (a_1,\ldots, a_n) \in S_n$ with $a_k = n$ then setting $\a _{[n-1]} := (a_1,\ldots, a_{k-1}, a_{k+1},\ldots, a_n) \in S_n$ we have  
	$$\mu ( \a ) = \nu ( \a _{[n-1]} ) 
	\cdot {\mathbb P}(X_n=k).$$
It follows that given any family ${\cal F} \subset S_n$ we have  
$\mu ({\cal F}| X_n=k) = \nu ({\cal F}_k)$. Note that the measure $\nu $ does not depend on $k$, which is important below.\np

With this in hand, suppose that $\A,\B\subset S_n$ are strong up-sets. Then 
	\begin{equation*}
		\mu \big ( {\cal A} \cap {\cal B} \big ) 
			= 
		\sum _{k\in [n]} 
		{\mathbb P}(X_n = k)
		\mu \big ( {\cal A} \cap {\cal B} | X_n = k\big )
			= 
		\sum _{k\in [n]} 
		{\mathbb P}(X_n = k)
		\nu \big ( ({\cal A} \cap {\cal B})_k\big ),
	\end{equation*}
where the second equality follows by the previous paragraph. Clearly we have $({\cal A}\cap {\cal B})_k = {\cal A}_k \cap {\cal B}_k$. Moreover, as both ${\cal A}_k$ and ${\cal B}_k$ are strong up-sets by Lemma \ref{lem:slice} (i) and $\nu$ is independently generated, by induction we have $\nu ({\cal A}_k \cap {\cal B}_k) \geq \nu ({\cal A}_k) \cdot \nu ({\cal B}_k)$. Applying this above gives 
	\begin{align*}
		\mu ( {\cal A} \cap {\cal B} ) 
			&\geq 
		\sum _{k\in [n]} 
		{\mathbb P}(X_n = k) \cdot  
		\nu ( {\cal A}_k ) \cdot 
		\nu ( {\cal B}_k ) 
			= 
		\sum _{k\in [n]} t_k u_k v_k, 
	\end{align*}
where $t_k = {\mathbb P}(X_n = k)$, $u_k = \nu ( {\cal A}_k )$ and  $v_k = \nu ( {\cal B}_k )$.
Note now from Lemma \ref{lem:slice} (ii) that we have $u_1\leq \ldots \leq u_n$, $v_1\leq \ldots \leq  v_n$ and $\sum _{k\in [n]} t_k =1$. Thus the hypothesis of Lemma \ref{lem:abt} applies, and this lemma gives
\begin{align*}
	\mu ({\cal A} \cap {\cal B}) 
			\geq 
	\sum _{k\in [n]} t_k u_k v_k 
			& \geq
	\Big ( \sum _{k\in [n]} t_k u_k \Big )
	\Big ( \sum _{k\in [n]} t_k v_k \Big )\\	
			& =
		\Big ( \sum _{k\in [n]} 
		{\mathbb P}(X_n = k) 
		\nu ( {\cal A}_k )\Big ) 
		\Big ( \sum _{k\in [n]} 
		{\mathbb P}(X_n = k) 
		\nu ( {\cal B}_k ) \Big )\\
			&= 
		\mu ({\cal A} ) \cdot \mu ({\cal B} ).
\end{align*}
This completes the proof of the theorem.
\end{proof}

In contrast to this self-contained proof, our second proof will use the machinery of the FKG inequality in the following form.

\begin{theorem}[FKG inequality \cite{FKG}]
\label{fkg}
Let $L$ be a finite distributive lattice and let $\P$ be a probability measure on $L$ satisfying
\[
\P(x\wedge y) \cdot \P(x\vee y)\geq\P(x) \cdot \P(y).
\]
for all $x,y\in L$. Then any up-sets $\A,\B\subset L$ satisfy 
$\P(\A\cap\B)\geq\P(\A) \cdot \P(\B).$
\end{theorem}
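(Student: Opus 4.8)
The plan is to follow the same inductive `slicing' philosophy as the proof of Theorem~\ref{thm:strong-ig} above, now in the abstract lattice, supplying the one ingredient that independence rendered free there. First I would pass to a concrete lattice. By Birkhoff's representation theorem every finite distributive lattice $L$ is (isomorphic to) the lattice of down-sets of its poset of join-irreducibles, and hence embeds as a \emph{sublattice} of a Boolean lattice $2^{[N]}$, with $\wedge,\vee$ realised as $\cap,\cup$. Extend $\mu$ to $2^{[N]}$ by putting $\mu=0$ off $L$; the hypothesis $\mu(x\wedge y)\mu(x\vee y)\geq\mu(x)\mu(y)$ is preserved, being automatic once some point lies off $L$ (the right side vanishes) and holding on $L$ since $L$ is a sublattice. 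Replacing each up-set $\A\subseteq L$ by its up-closure $\A^{\uparrow}$ in $2^{[N]}$ changes no relevant quantity, since $\A^{\uparrow}\cap L=\A$ and $\mu$ lives on $L$, whence $\mu(\A^{\uparrow})=\mu(\A)$ and $\mu(\A^{\uparrow}\cap\B^{\uparrow})=\mu(\A\cap\B)$. So it suffices to treat $L=2^{[N]}$.

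On $2^{[N]}$ I would induct on $N$ (the base case $N\leq 1$ being a two-point chain, on which up-sets are nested and the inequality is immediate). Write a point as $(\omega',\omega_N)$ with $\omega_N\in\{0,1\}$, and for $t\in\{0,1\}$ let $\A_t=\{\omega':(\omega',t)\in\A\}$. Conditioning on the last coordinate,
\[
\mu(\A\cap\B)=\sum_{t\in\{0,1\}}\mu(\omega_N=t)\,\mu\big(\A\cap\B\mid\omega_N=t\big).
\]
For each fixed $t$ the conditional (normalised) measure $\hat\mu_t$ on $2^{[N-1]}$ again satisfies the log-supermodularity hypothesis (apply the displayed inequality to points sharing last coordinate $t$), and $\A_t,\B_t$ are up-sets; so induction gives $\mu(\A\cap\B\mid\omega_N=t)=\hat\mu_t(\A_t\cap\B_t)\geq u_t v_t$, where $u_t=\hat\mu_t(\A_t)$ and $v_t=\hat\mu_t(\B_t)$. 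With $w_t=\mu(\omega_N=t)$ it remains to pass from $\sum_t w_t u_t v_t$ to $\big(\sum_t w_t u_t\big)\big(\sum_t w_t v_t\big)=\mu(\A)\mu(\B)$, which is exactly Lemma~\ref{lem:abt} on the two-point chain, \emph{provided} $u_t$ and $v_t$ are monotone in $t$.

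The main obstacle is this last monotonicity, the one genuinely new point beyond Theorem~\ref{thm:strong-ig}: there the conditional measure was the \emph{same} on every slice, so monotonicity was free from Lemma~\ref{lem:slice}(ii), whereas here the slice measures $\mu_0(\omega'):=\mu(\omega',0)$ and $\mu_1(\omega'):=\mu(\omega',1)$ differ. Taking $x=(\omega',0)$ and $y=(\eta',1)$ in the hypothesis gives
\[
\mu_0(\omega'\wedge\eta')\,\mu_1(\omega'\vee\eta')\geq\mu_0(\omega')\,\mu_1(\eta'),
\]
which is precisely the Holley condition \cite{holley}; it forces the normalisation $\hat\mu_1$ to stochastically dominate $\hat\mu_0$. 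Since $\A$ is an up-set we have $\A_0\subseteq\A_1$, and therefore $u_1=\hat\mu_1(\A_1)\geq\hat\mu_0(\A_1)\geq\hat\mu_0(\A_0)=u_0$ (and likewise for $v$), as required. Establishing this stochastic domination is itself a Holley-type inequality, provable by the same conditioning induction with an elementary two-variable base case. Alternatively one can avoid the monotonicity analysis entirely and obtain the theorem in one stroke from the Ahlswede--Daykin four-functions inequality \cite{AD}, applied with $\alpha=\mu|_{\A}$, $\beta=\mu|_{\B}$, $\gamma=\mu|_{\A\cap\B}$ and $\delta=\mu$, its pointwise hypothesis $\alpha(x)\beta(y)\leq\gamma(x\vee y)\delta(x\wedge y)$ reducing, via the fact that $\A,\B$ are up-sets, to log-supermodularity. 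A final wrinkle is that the embedding forces $\mu$ to vanish off $L$, so the domination step is best run on the support of $\mu$ or after a strictly positive perturbation and a limit.
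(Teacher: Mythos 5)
First, a point of comparison: the paper does not prove Theorem~\ref{fkg} at all --- it is quoted from \cite{FKG} and used as a black box --- so your proposal has to stand on its own. Its architecture is the standard one and much of it is right: the Birkhoff embedding of $L$ as a sublattice of $2^{[N]}$, the extension of $\mu$ by zero (which does preserve the hypothesis), the replacement of up-sets by their up-closures, the slicing induction, and the recombination via the two-point case of Lemma~\ref{lem:abt}. You also correctly isolate the one step that was free in Theorem~\ref{thm:strong-ig} but is not free here: the monotonicity $u_0\le u_1$, $v_0\le v_1$, i.e.\ the stochastic domination of $\hat\mu_0$ by $\hat\mu_1$.

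The genuine gap is that this step is never proved, and neither of your ways of discharging it works as written. Citing \cite{holley} fails on the point you half-notice but cannot repair: Holley's theorem as proved there requires strictly positive measures, and your own construction forces zeros since $\mu$ vanishes off $L$. The suggested repairs do not go through: an additive or mixture perturbation destroys the hypothesis $\mu(x\wedge y)\mu(x\vee y)\ge\mu(x)\mu(y)$ (take $\mu(x\wedge y)=\mu(x\vee y)=\mu(x)=0<\mu(y)$, which occurs for the extension by zero; any perturbation then makes the left side order $\epsilon^2$ and the right side order $\epsilon$), and restricting to the support of $\mu$ lands you on a sublattice that is a general distributive lattice rather than a Boolean one, so re-embedding it reintroduces the zeros --- the reduction is circular. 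As for the claim that the domination is ``provable by the same conditioning induction with an elementary two-variable base case'': it cannot be elementary, because it \emph{is} an instance of the theorem being proved. Indeed, when $w_0,w_1>0$, unwinding $\mu({\cal A}\cap{\cal B})\ge\mu({\cal A})\mu({\cal B})$ for the up-set ${\cal B}=\{\omega:\omega_N=1\}$ gives exactly $u_1\ge u_0$. So your induction does not reduce the problem; it reduces it to the special case where one up-set is $\{\omega_N=1\}$, and that case still contains the whole difficulty (summing the pointwise hypothesis across slices fails because $(x,y)\mapsto(x\vee y,x\wedge y)$ is many-to-one; making log-supermodularity survive such projections is precisely what the four-functions formulation exists to do). Your fallback via Ahlswede--Daykin \cite{AD} is, by contrast, a complete and correct derivation --- the four functions theorem needs no positivity, your verification of its hypothesis is right, and it even renders the Birkhoff reduction unnecessary --- but it proves Theorem~\ref{fkg} by invoking a strictly stronger cited theorem, the same move, one level up, as the paper's own citation of \cite{FKG}. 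For a genuinely self-contained argument, the efficient path is to prove the four functions theorem by induction on $N$ (as in Chapter~19 of \cite{BB}); its inductive step supplies exactly the projection property your sketch is missing.
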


To make use of Theorem \ref{fkg} recall that the permutations $S_n$ are in one-to-one correspondence with elements of the grid $G_n:= [1] \times [2]\times \cdots \times [n]$, where $\a \in S_n$ is indentified with ${\bf f}(\a ) \in G_n$. Using this correspondence we will transfer the `grid' partial order $\leq _g $ on $G_n$ to $S_n$, where ${\bf f} \leq _g {\bf g}$ for ${\bf f}, {\bf g} \in G_n$ if ${\bf f}_i \leq {\bf g}_i$ for all $i\in [n]$.

\begin{defn}
The \emph{grid order} $\leq _g$ on $S_n$ is given by defining $\a \leq _g \b $ if ${\bf f}({\bf a}) \leq _g {\bf f}({\bf b})$ when viewed as elements of $G_n$. A family $\A \subset S_n$ is a \emph{grid up-set} if whenever ${\bf a} \in\A$ and $\b \in S_n$ with $\a \leq _g \b $ then $\b \in \A $.
\end{defn}

We now use the FKG inequality to $G_n$ to give a second proof of Theorem \ref{thm:strong-ig}. In fact this approach strengthens the result in two ways: it applies to grid up-sets rather than just strong up-sets, and it applies to measures satisfying a more general FKG-type condition.\np

Let $\a , \b \in S_n$. As $G_n$ is a distributive lattice we can define $\a \vee \b $ and $\a \wedge \b $ in $S_n$ in  natural way: let $\a \vee \b , \a \wedge \b $ be the unique elements of $S_n$ with:
\begin{align*}
{\bf f}({\bf a}\vee \b )_k&=\max \big \{{\bf f}(\a)_k,{\bf f}(\b)_k \big \}; \qquad 
{\bf f}({\bf a}\wedge \b )_k =\min \big \{{\bf f}(\a)_k,{\bf f}(\b)_k \big \}.
\end{align*}

\begin{theorem}
\label{thm:strong-grid}
Suppose that $\P$ is a probability measure on $S_n$ with
\begin{equation}
	\label{equation: join meet relation}
\P(\a \vee \b )\cdot \P(\a \wedge \b) \geq \P(\a )\cdot \P(\b )
\end{equation}
for all $\a , \b \in S_n$. 
Then any grid up-sets $\A,\B \subset S_n$ satisfy 
$\P(\A\cap\B) \geq \P(\A)\cdot \P(\B)$.
\end{theorem}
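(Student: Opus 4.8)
The plan is to deduce this directly from the FKG inequality (Theorem \ref{fkg}) by transporting the problem from $S_n$ to the grid $G_n$ via the bijection $\mathbf{f}$. The essential observation is that, by construction, $\mathbf{f}$ is an isomorphism of lattices between $(S_n, \leq_g)$ and $(G_n, \leq_g)$: the grid order on $S_n$ was defined precisely so that $\mathbf{a} \leq_g \mathbf{b}$ holds exactly when $\mathbf{f}(\mathbf{a}) \leq \mathbf{f}(\mathbf{b})$ coordinatewise, and the meet and join on $S_n$ were defined to match the coordinatewise minimum and maximum on $G_n$. Thus everything we need about the order structure on $S_n$ can simply be read off from $G_n$.

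First I would record that $G_n = [1] \times [2] \times \cdots \times [n]$ is a finite distributive lattice. This is standard: each factor $[k]$ is a chain, hence trivially a distributive lattice, and a finite product of distributive lattices is again distributive, with meet and join computed coordinatewise. Since $\mathbf{f}$ is a bijection onto $G_n$, pushing $\mu$ forward along $\mathbf{f}$ yields a probability measure $\tilde{\mu}$ on $G_n$ given by $\tilde{\mu}(\mathbf{f}(\mathbf{a})) = \mu(\mathbf{a})$. Next I would verify that $\tilde{\mu}$ satisfies the hypotheses of Theorem \ref{fkg} on $G_n$. For $x = \mathbf{f}(\mathbf{a})$ and $y = \mathbf{f}(\mathbf{b})$ in $G_n$ we have $x \vee y = \mathbf{f}(\mathbf{a} \vee \mathbf{b})$ and $x \wedge y = \mathbf{f}(\mathbf{a} \wedge \mathbf{b})$, directly from the way the lattice operations on $S_n$ were defined. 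Hence the log-supermodularity condition $\tilde{\mu}(x \vee y)\cdot \tilde{\mu}(x \wedge y) \geq \tilde{\mu}(x)\cdot \tilde{\mu}(y)$ is precisely the assumed inequality \eqref{equation: join meet relation}.

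Finally, a grid up-set $\mathcal{A} \subset S_n$ maps under $\mathbf{f}$ to an up-set $\mathbf{f}(\mathcal{A}) \subset G_n$ in the product order, again immediately from the definition of a grid up-set. Applying Theorem \ref{fkg} to the up-sets $\mathbf{f}(\mathcal{A})$ and $\mathbf{f}(\mathcal{B})$ in $G_n$ gives $\tilde{\mu}(\mathbf{f}(\mathcal{A}) \cap \mathbf{f}(\mathcal{B})) \geq \tilde{\mu}(\mathbf{f}(\mathcal{A}))\cdot \tilde{\mu}(\mathbf{f}(\mathcal{B}))$, and translating back through the bijection yields $\mu(\mathcal{A} \cap \mathcal{B}) \geq \mu(\mathcal{A})\cdot \mu(\mathcal{B})$, as required.

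I do not expect any serious obstacle, since the result is fundamentally a restatement of FKG under a change of coordinates. The only points requiring (routine) care are confirming that $G_n$ is distributive, which holds because it is a product of chains, and checking that $\mathbf{f}$ faithfully carries the order, the lattice operations, the up-sets, and the measure back and forth; all of these follow at once from the definitions, so the real work has already been done in setting up the grid correspondence.
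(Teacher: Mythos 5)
Your proposal is correct and follows exactly the paper's own argument: transfer the measure to $G_n$ via the bijection $\mathbf{f}$, observe that the hypothesis \eqref{equation: join meet relation} becomes the log-supermodularity condition on the distributive lattice $G_n$ and that grid up-sets become up-sets, then apply Theorem \ref{fkg}. The additional routine checks you spell out (distributivity of a product of chains, faithfulness of $\mathbf{f}$ for order, operations, and measure) are implicit in the paper's proof, so there is no substantive difference.
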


\begin{proof}[Proof of Theorem  \ref{thm:strong-grid}]
Transfer $\mu $ from $S_n$ to $G_n$, by setting 
$\mu ({\bf f}(\a)) = \mu (\a)$ for all $\a \in S_n$. As 
${\bf f}: S_n \to G_n$ is a bijection this defines $\mu $ on $G_n$. By choice of the operations $\vee $ and $\wedge $ on $S_n$ above, \eqref{equation: join meet relation} implies that 
	\begin{equation*}
		\P({\bf f} \vee {\bf g} )\cdot \P({\bf f} \wedge {\bf g} ) \geq \P( {\bf f} )\cdot \P({\bf g} )
	\end{equation*}
for all ${\bf f}, {\bf g} \in G_n$. The result now follows by applying Theorem \ref{fkg} to $G_n$.
\end{proof}

To complete our second proof of Theorem \ref{thm:strong-ig}, by Theorem \ref{thm:strong-grid}, it is enough to show that (i) every strong up-set is a grid up-set and (ii) that \eqref{equation: join meet relation} holds for independently generated measures. This is content of the next two lemmas.

\begin{lemma}
\label{lem:grid-strong}
If $\a, \b \in S_n$ with $\a \leq _g \b $ then $\a \leq _s \b $. Consequently, every strong up-set in $S_n$ is also a grid up-set.
\end{lemma}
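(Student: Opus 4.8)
The plan is to prove the first assertion, that $\a \leq_g \b$ implies $\a \leq_s \b$, by induction on $n$, slicing according to the position of the largest element $n$, as in the rest of this section; the `consequently' clause will then be immediate.

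First I would record how the encoding ${\bf f}$ interacts with deleting $n$. Writing $\a_{[n-1]} \in S_{n-1}$ for the permutation obtained by removing $n$ from $\a$, two facts follow directly from the definition of ${\bf f}$. On the one hand, ${\bf f}(\a)_n = \pos(\a, n)$, since the values occupying the first $\pos(\a,n)$ positions of $\a$ are precisely those $i \in [n]$ with $\pos(\a, i) \leq \pos(\a, n)$. On the other hand, deleting $n$ preserves the relative order of every pair in $[n-1]$, so ${\bf f}(\a_{[n-1]})_j = {\bf f}(\a)_j$ for all $j \leq n-1$. Consequently $\a \leq_g \b$ is equivalent to the conjunction of $\pos(\a,n) \leq \pos(\b,n)$ and $\a_{[n-1]} \leq_g \b_{[n-1]}$ in $S_{n-1}$.

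The inductive step then proceeds in two stages. By the inductive hypothesis, $\a_{[n-1]} \leq_g \b_{[n-1]}$ gives $\a_{[n-1]} \leq_s \b_{[n-1]}$, so there is a sequence of swaps of inversions among the values $[n-1]$ carrying $\a_{[n-1]}$ to $\b_{[n-1]}$. Applying exactly these value-swaps inside $\a$ (which leaves $n$ untouched throughout) realises $\a \leq_s \a'$, where $\a'$ is $\b_{[n-1]}$ with $n$ reinserted at position $\pos(\a,n)$. In the second stage, since $\pos(\a,n) \leq \pos(\b,n)$, I slide $n$ rightward one place at a time: whenever $n$ sits immediately to the left of some $x < n$, the pair $\{x,n\}$ is an (adjacent) inversion, and swapping it moves $n$ one step right without altering the relative order of $[n-1]$. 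Repeating until $n$ reaches position $\pos(\b,n)$ yields $\b$ and shows $\a' \leq_s \b$; hence $\a \leq_s \b$.

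The two lifting arguments are the only points needing care. For the first stage, each swap used for $\a_{[n-1]}$ remains a genuine inversion swap once $n$ is present, because inserting $n$ neither changes whether a pair of $[n-1]$ is an inversion nor occupies a position being swapped. The second stage is exactly the monotonicity used in Lemma \ref{lem:slice}(ii): moving $n$ to the right is a step up in $\leq_s$. Finally, the `consequently' clause follows immediately: if $\A$ is a strong up-set, $\a \in \A$ and $\a \leq_g \b$, then $\a \leq_s \b$ by the above, and closure of $\A$ upward in $\leq_s$ gives $\b \in \A$, so $\A$ is a grid up-set.
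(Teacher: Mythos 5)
Your proof is correct, but it takes a genuinely different route from the paper's. The paper argues directly on the grid order: it reduces, by transitivity, to the case where $(\a,\b)$ is a covering relation in $\leq_g$ (one coordinate of ${\bf f}$ increases by exactly $1$, all others fixed), and then exhibits a single pair $\{i,j\}\in\inv(\a)$ whose swap carries $\a$ to $\b$ --- so each grid covering step is realised by one strong-order step. Your argument instead inducts on $n$ via the recursive structure of the Lehmer encoding. Both of your preliminary observations are correct: ${\bf f}(\a)_n=\pos(\a,n)$, and ${\bf f}(\a_{[n-1]})_j={\bf f}(\a)_j$ for $j\leq n-1$ since inversion and relative-order data among $[n-1]$ are unaffected by deleting $n$; hence $\a\leq_g\b$ does decompose as $\pos(\a,n)\leq\pos(\b,n)$ together with $\a_{[n-1]}\leq_g\b_{[n-1]}$. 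Your two lifting stages are also sound: a pair in $[n-1]$ is an inversion with $n$ present if and only if it is one with $n$ deleted, so the inductive swap sequence lifts verbatim with $n$ held fixed, and sliding $n$ rightward by adjacent swaps is exactly the monotonicity mechanism of Lemma~\ref{lem:slice}(ii). What your approach buys is coherence with the slicing machinery already set up for the first proof of Theorem~\ref{thm:strong-ig}, making the whole section run on one induction scheme; what the paper's approach buys is brevity and, as a by-product, the explicit description of $\leq_g$ as generated by swaps of `dominated inversions', which the paper records at the end of Section~2 and which is not visible from the inductive argument.
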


\begin{proof}
Suppose that $\a , \b \in S_n$ where $(\a ,\b )$ is a covering relation in the grid order. That is, there is $i\in [n]$ with ${\bf f}(\b )_i = {\bf f}(\a )_i + 1$ and ${\bf f}(\a )_j ={\bf f}(\b )_j$ for all $j\neq i$. It suffices to show that $\a \leq _s \b $ by transitivity, since every relation in $\leq _g$ can be expressed as a sequence of covering relations.\np

Let $\pos (\a , i) = k$ and take $\ell  > k$ minimal so that $\pos (\a , j) = \ell$ for some $j < i$; such a choice of $\ell $ must exist since $(\a , \b )$ is a covering relation with ${\bf f}(\b )_i = {\bf f}(\a )_i+1$. It is clear that $\a $ and $\b $ differ only in position $k$ and $\ell $, where $a_k = b_{\ell } = i$ and $a_{\ell } = b_k = j$. Thus $\{i,j \} \in \inv (\a)$ and swapping these entries we obtain $\b $, i.e.  $\a \leq _s \b $.\np

Lastly, if $\A $ is a strong up-set with $\a \in \A $ and $\a \leq _g \b $ then $\a \leq _s \b $, and so $\b \in \A $. Thus $\A $ is a grid up-set, as required.
\end{proof}

\begin{lemma}
\label{lem:ig}
Inequality \eqref{equation: join meet relation} holds for every independently generated probability measure $\P$ on $S_n$.
\end{lemma}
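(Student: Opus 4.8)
The plan is to observe that, once transferred to the grid $G_n$ via the bijection ${\bf f}$, an independently generated measure is simply a product measure, for which the lattice condition \eqref{equation: join meet relation} holds coordinate by coordinate---indeed with equality. So I would first write $p_k(j) := {\mathbb P}(X_k = j)$ for $j \in [k]$, so that by definition of an independently generated measure we have $\mu(\a) = \prod_{k \in [n]} p_k({\bf f}(\a)_k)$ for every $\a \in S_n$. Next I would recall from the definition of the join and meet on $S_n$ that ${\bf f}(\a \vee \b)_k = \max\{{\bf f}(\a)_k, {\bf f}(\b)_k\}$ and ${\bf f}(\a \wedge \b)_k = \min\{{\bf f}(\a)_k, {\bf f}(\b)_k\}$, so that
\[
\mu(\a \vee \b) \cdot \mu(\a \wedge \b) = \prod_{k\in[n]} p_k\big(\max\{{\bf f}(\a)_k, {\bf f}(\b)_k\}\big) \cdot p_k\big(\min\{{\bf f}(\a)_k, {\bf f}(\b)_k\}\big).
\]

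The key step is then the elementary observation that for any two reals $x, y$ we have $\{\max\{x,y\}, \min\{x,y\}\} = \{x, y\}$ as multisets, whence $p_k(\max\{{\bf f}(\a)_k, {\bf f}(\b)_k\}) \cdot p_k(\min\{{\bf f}(\a)_k, {\bf f}(\b)_k\}) = p_k({\bf f}(\a)_k) \cdot p_k({\bf f}(\b)_k)$ for each $k$. Multiplying these equalities over all $k \in [n]$ yields $\mu(\a \vee \b)\cdot \mu(\a \wedge \b) = \mu(\a) \cdot \mu(\b)$, which establishes \eqref{equation: join meet relation} (in fact as an equality, so that the stated inequality holds trivially).

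There is really no substantive obstacle here: the entire content is the recognition that the ${\bf f}$-encoding turns an independently generated measure into a product measure on the chain-product $G_n$, and a product measure automatically satisfies the lattice condition \eqref{equation: join meet relation} with equality. The only point requiring a little care is to match the definitions correctly---the operations $\vee$ and $\wedge$ on $S_n$ were defined precisely so as to correspond to coordinatewise maximum and minimum on $G_n$---after which the factorisation above is immediate.
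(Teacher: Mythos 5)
Your proposal is correct and follows essentially the same route as the paper's own proof: both exploit that the encoding ${\bf f}$ turns an independently generated measure into a product measure on $G_n$, and then use the coordinatewise identity $\{\max\{x,y\},\min\{x,y\}\}=\{x,y\}$ to conclude that \eqref{equation: join meet relation} holds with equality. No gaps; this matches the paper's argument step for step.
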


\begin{proof}
Suppose that $\P $ is an independently generated probability measure on $S_n$, defined by the independent random variables $\{X_k\}_{k\in [n]}$. Then for every $\a \in S_n$ we have 
\[
\P(\a )= \prod _{k\in [n]} {\mathbb P}\big (X_k = {\bf f}(\a) _k \big ).
\]
Then given $\a , \b \in S_n$ and $\a \vee \b $ and $\a \wedge \b $ as above, we have 
\begin{align*}
\P(\a \vee \b ) \cdot \P(\a \vee \b ) 
	& = 
\prod _{k\in [n]} \Big [ {\mathbb P}\big (X_k = \max ({\bf f}(\a) _k, {\bf f}(\b) _k) \big )\\ 
	& \quad \qquad \qquad \times {\mathbb P}\big (X_k = \min ({\bf f}(\a) _k, {\bf f}(\b) _k) \big ) \Big ]\\ 
	& = 
\prod _{k\in [n]} \Big ( {\mathbb P}\big (X_k = {\bf f}(\a) _k \big ) \cdot {\mathbb P}\big (X_k = {\bf f}(\b) _k \big ) \Big ) = \mu ({\bf a}) \cdot \mu ({\bf b}).
\end{align*}
Thus \eqref{equation: join meet relation} holds with equality for all $\a , \b \in S_n$, as required.
\end{proof}

Above we defined the grid order on $S_n$ in such a way that it was isomorphic to the usual product ordering on $G_n$. Analysing the proof of Lemma \ref{lem:grid-strong} more carefully gives an alternative description of the grid order on $S_n$ in terms of certain switches. Given $1 \leq i < j \leq n$, recall that $\{i,j\}$ is an inversion in $\a$  if $\pos (\a ,j) = k < \ell =\pos (\a ,i)$. We will say that $\{i,j\}$ is a \emph{dominated inversion} in $\a $ if additionally $a_m \geq i,j$ for all $m\in [k,\ell ]$.
Then $\a \leq _g \b $ if $\b$ can be reached from $\a $ by a sequence of operations, each consisting of swapping the elements from a dominated inversion.\vspace{1mm}

\begin{figure}[ht]
\centering
\includegraphics[scale=1]{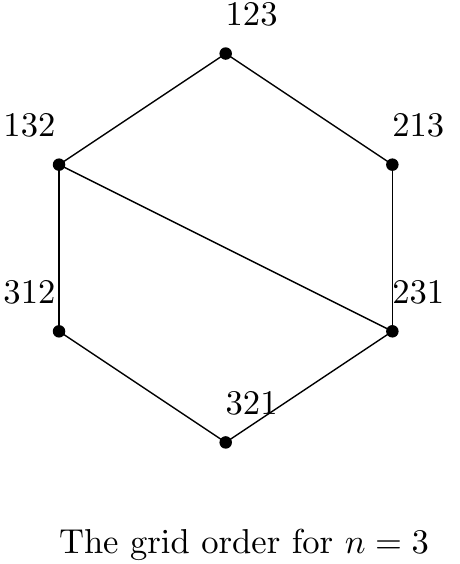}
\end{figure}

\section{No correlation for weak up-sets}

In this section we construct weak up-sets which are very far from being positively correlated. We will need the following simple concentration result.

\begin{lemma}
\label{lem:concentration}
Let $0<\gamma , \delta , \varepsilon <1$. Let $U, V \subset [n]$ with $|U| = \gamma n$ and $|V| = \delta n$. Select $\a \in S_n$ uniformly at random and consider the random variable $N(\a) := |\{ i\in U: a_i \in V\}|$. Then $\mathbb{P} \big ( N>(\gamma +\varepsilon)|V| \big ) 
\rightarrow 0$ as $n\rightarrow\infty$.
\end{lemma}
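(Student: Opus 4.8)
The plan is to recognise $N$ as a sum of indicator variables and apply a second-moment (Chebyshev) argument, exploiting the fact that the threshold $(\gamma+\varepsilon)|V|$ exceeds the mean of $N$ by an amount linear in $n$, whereas the standard deviation of $N$ is only of order $\sqrt{n}$.

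First I would write $N(\a) = \sum_{i\in U} Y_i$, where $Y_i = \mathbf{1}[a_i \in V]$. Since $\a$ is uniform, each entry $a_i$ is uniform on $[n]$, so $\mathbb{E}[Y_i] = |V|/n = \delta$, and by linearity $\mathbb{E}[N] = |U|\cdot|V|/n = \gamma|V|$. The crucial bookkeeping observation is that the target threshold satisfies $(\gamma+\varepsilon)|V| = \mathbb{E}[N] + \varepsilon|V| = \mathbb{E}[N] + \varepsilon\delta n$, so the event $\{N > (\gamma+\varepsilon)|V|\}$ is an upper-tail deviation of $N$ from its mean by the \emph{linear} quantity $\varepsilon\delta n$.

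Next I would bound the variance. For distinct $i,j\in U$ the pair $(a_i,a_j)$ is a uniform ordered pair of distinct values, so $\mathbb{E}[Y_iY_j] = \frac{|V|(|V|-1)}{n(n-1)} \le \frac{|V|^2}{n^2} = \mathbb{E}[Y_i]\,\mathbb{E}[Y_j]$, i.e. the indicators are negatively correlated. (Equivalently, the set of values occupying the positions in $U$ is a uniformly random $|U|$-subset of $[n]$, so $N$ is an ordinary hypergeometric variable.) Either way,
\[
\mathrm{Var}(N) = \sum_{i\in U}\mathrm{Var}(Y_i) + \sum_{i\ne j}\mathrm{Cov}(Y_i,Y_j) \le \sum_{i\in U}\mathrm{Var}(Y_i) \le |U|\,\delta = \gamma\delta n,
\]
so $\mathrm{Var}(N) = O(n)$. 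Chebyshev's inequality then finishes the argument:
\[
\mathbb{P}\big(N > (\gamma+\varepsilon)|V|\big) \le \mathbb{P}\big(|N - \mathbb{E}[N]| \ge \varepsilon\delta n\big) \le \frac{\mathrm{Var}(N)}{(\varepsilon\delta n)^2} \le \frac{\gamma\delta n}{\varepsilon^2\delta^2 n^2} = \frac{\gamma}{\varepsilon^2\delta n} \longrightarrow 0.
\]

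There is no genuinely hard step here: the only thing to get right is that $\mathbb{E}[N] = \gamma|V|$, so that the threshold sits a linear distance above the mean — this is precisely what lets a crude second-moment bound suffice. I would also remark that the rounding of $\gamma n$ and $\delta n$ to integers is harmless and changes nothing in the estimates above.
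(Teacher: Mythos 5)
Your proposal is correct and follows essentially the same route as the paper: decompose $N$ into indicators over $i \in U$, compute $\mathbb{E}[N] = \gamma|V|$, use the negative correlation $\mathbb{E}[Y_iY_j] \le |V|^2/n^2$ to bound $\mathrm{Var}(N) \le \gamma|V| = \gamma\delta n$, and finish with Chebyshev. Your bound $\gamma/(\varepsilon^2\delta n)$ is identical to the paper's $\gamma/(\varepsilon^2|V|)$, so there is nothing to add.
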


\begin{proof}
For each $i\in [n]$ let ${1}_i: S_n \to \{0,1\}$ denote the Bernoulli random variable with ${1}_i({\bf a}) = 1$ iff $a_i \in V$. Then ${\mathbb E} [ {1}_i ] = |V|/n$ for all $i\in [n]$. Noting that $N = \sum _{i\in U} {1}_i$, linearity of expectation gives ${\mathbb E}[N] = \gamma |V|$.\np

To calculate the variance of $N$, note that 
 ${\mathbb E}[1_i\cdot 1_j] \leq |V|^2/n^2$ for $i\neq j$. Since $N = \sum _{i\in U} 1_i$, this gives 
$${\mathbb E}[N^2] = \sum _{i\in U} {\mathbb E}[1_i^2] 
+ \sum _{i\neq j \in U} {\mathbb E}[1_i 1_j] \leq \gamma |V| + \gamma ^2 |V|^2,$$ 
and so ${\mathbb V}\mbox{ar}(N) = {\mathbb E}[N^2] - \big ( {\mathbb E}[N] \big )^2 \leq \gamma |V|$. Chebyshev's inequality then gives 
${\mathbb P}(N> (\gamma + \varepsilon )|V|) \leq {\mathbb P}(|N - {\mathbb E}[N]| \geq \varepsilon |V|) \leq \gamma / \big ( \varepsilon ^2 |V| \big ) \to 0$ as $n\to \infty $.
\end{proof}

We are now ready for the proof of Theorem \ref{thm:weak}.

\begin{proof}[Proof of Theorem \ref{thm:weak}]

Given $0<\alpha , \beta, \varepsilon <1$, we require to find weak up-sets ${\cal A}, {\cal B} \subset S_n$ for large $n$, which satisfy $|{\cal A}| \geq \alpha n!$, $|{\cal B}| \geq \beta n!$ and $|{\cal A}\cap {\cal B}| \leq ( \max (\alpha + \beta - 1 , 0) + 5\varepsilon ) n!$. Indeed, by deleting minimal elements from such ${\cal A}$ and ${\cal B}$ we obtain weak up-sets of size $\lfloor \alpha n!\rfloor $ and $\lfloor \beta n! \rfloor $ as in the theorem.\np

To begin, set $m=\lceil ( \frac{\alpha }{\alpha + \beta } )n\rceil$ so that $\frac{m}{n-m}=\frac{\alpha }{\beta }+o(1)$. Consider the function $g: S_n \to [m]$ where $g({\bf a})$  equals the number of elements from $[m]$ which do not appear after element $m$ in $\bf a$. That is,
	$$g({\bf a}) 
		:=
	\big |\big \{i \in [m]: \pos ({\bf a}, i ) \leq 
	\pos ({\bf a}, m) \big \}\big |.$$
Noting that $g$ is non-decreasing under switching inversions, we see that ${\cal A} := \{ {\bf a} \in S_n: g({\bf a}) \geq (1 - \alpha )m \}$ is a weak up-set in $S_n$. Also noting that the families $L_i = \{{\bf a} \in S_n: g({\bf a}) = i\}$ for $i\in [m]$ partition $S_n$ into equal-sized sets, we obtain $|{\cal A}| = \sum _{i\in [(1-\alpha )m,m]} |L_i| \geq \alpha n!$.\np

Our second family ${\cal B}$ is defined similarly. Let $h : S_n \to [n-m+1]$, where $h({\bf a})$ equals the number of elements from $[m, n]:=\{m, m+1,\ldots, n\}$ which do not appear before element $m$ in ${\bf a}$. That is, 
	$$h({\bf a}) 
		:=
	\big |\big \{i \in [m,n]: 
	\pos ({\bf a}, i ) \geq 
	\pos ({\bf a}, m) \big \}\big |.$$
Reasoning as above, we find ${\cal B} := \{{\bf a} \in S_n: h({\bf a}) \geq (1 -\beta ) (n-m+1)\}$ is a weak up-set and $|{\cal B}| \geq \beta n!$.\np

Having defined both families, it only remains to upper bound $|{\cal A} \cap {\cal B}|$. Here it is helpful to consider two further families. 
\begin{itemize}
\item For ${\bf a} \in S_n$ let $N_1({\bf a}) := \big | \big \{ k\in U_1: a_k \in V_1 \big \} \big |$, where $U_1 = [(1-\alpha -\varepsilon )n]$ and $V_1 = [m]$. Then ${\cal E}_1 := \big \{{\bf a} \in S_n: N_1({\bf a}) \geq (1-\alpha )|V_1| \big \}.$

\item For ${\bf a} \in S_n$ let $N_2({\bf a}) := \big | \big \{ k\in U_2: a_k \in V_2 \big \} \big |$, where $U_2 = [(\beta + \varepsilon )n, n]$ and $V_2 = [m,n]$. Then ${\cal E}_2 := \big \{{\bf a} \in S_n: N_2({\bf a}) \geq (1-\beta )|V_2| \big \}.$ 

\end{itemize}
The functions $N_1$ and $ N_2$ are defined as in Lemma \ref{lem:concentration}, and so we have $|{\cal E}_1|, |{\cal E}_2| \leq \varepsilon n!$, provided $n \geq n_0(\alpha , \beta , \varepsilon )$.\np

We claim that every ${\bf a} \in {\cal C} := ({\cal A} \cap {\cal B}) \setminus ({\cal E}_1 \cup {\cal E}_2)$ satisfies 
\begin{equation}
\label{equation: interval bound} 
\pos ({\bf a},m) \in I : = \big [ (1-\alpha - \varepsilon )n, (\beta + \varepsilon )n \big ].
\end{equation}
Note that this will complete the proof of the theorem, since it gives 
$$|{\cal A} \cap {\cal B}| \leq |{\cal C}| + |{\cal E}_1| + |{\cal E}_2| \leq \Big ( \frac {|I|+1}{n} \Big  ) n! + 2 \varepsilon n! \leq \big ( \max ( \alpha + \beta - 1, 0 ) + 5\varepsilon \big ) n!.$$	
To prove the claim, take $\a \in {\cal A} \cap {\cal B}$. Note that if $\pos (\a , m) <(1-\alpha -\varepsilon)n$ then ${\bf a} \in {\cal E}_1$ since 
\begin{align*}
N_1(\a) = |\{k \in U_1: a_k \in V_1\}| & = |\{i \in [m]: \pos (\a , i) \leq (1-\alpha -\varepsilon )n\}|\\ &\geq g(\a) \geq (1-\alpha )m = (1-\alpha )|V_1|,
\end{align*} 
The first equality is by definition of $N_1$, the second equality holds by double counting, the first inequality follows from by definition of $g$ and the fact that $\pos (\a , m) \leq (1-\alpha -\epsilon )n$, and the final inequality holds as $\a \in \A $.\np

Similarly, if $\pos (\a , m) > (\beta + \varepsilon )n$ then $\a \in {\cal E}_2$, since
\begin{align*}
N_2(\a) 
	= 
|\{k \in U_2: a_k \in V_2\}| & = |\{i \in [m,n]: \pos (\a , i) \geq (\beta +\varepsilon )n\}|\\ 
	&\geq 
h(\a) \geq (1 - \beta )(n-m+1) = (1-\beta )|V_2|.
\end{align*} 
Again, the first two equalities hold by definition of $N_2$ and by double counting respectively. The first inequality follows from the definition of $h$ and the fact that $\pos (\a , m) > (\beta + \varepsilon )n$, and the final inequality holds as $\a \in \B $.\np

We have shown that if $\a \in (\A \cap \B) \setminus ({\cal E}_1 \cup {\cal E}_2) = {\cal C}$ then $\a$ satisfies \eqref{equation: interval bound} which, as described above, completes the proof.
\end{proof}

\section{Examples and an application}

Several natural families of permutations enjoy the property of being strong up-sets. In the first subsection we present a number of examples of these. Together these provide a wide variety of families for which positive correlation results can be deduced from Theorem \ref{thm:strong} and Theorem \ref{thm:strong-ig}. For instance, we shall see that for a random permutation ${\bf a} \in S_n$ (chosen uniformly or following an independently generated measure), the event that no element is displaced by more than a fixed distance $d$ by 
${\bf a}$ and the event that ${\bf a}$ contains at most $k$ inversions are positively correlated. Likewise, each of these events is positively correlated with the event that at least $u$ elements from $\{1,\dots,v\}$ occur among the first $w$ positions in ${\bf a}$.

In the second subsection we will give an application of Theorem \ref{thm:strong} to the correlation of left-compressed set families.

\subsection{Examples of strong up-sets}

\subsubsection*{Layers} 

For each $k\in \big [ 0, \binom {n}{2} \big ]$ let ${\cal L}_k := \{ \a \in S_n : |\inv (\a)| = k\}$. Then it is easily seen that the family ${\cal L}_{\geq k} := \cup _{i\geq k} {\cal L}_i$ is a strong up-set. In words, this is the set of all permutations which can be written as a product of at most $\binom{n}{2}-k$ adjacent transpositions. 

\subsubsection*{Band-like permutations} 
Our next example is based on considering how much each element is moved by a permutation.
Given a permutation $\a \in S_n$ and an element $i\in[n]$, the \emph{displacement} of $i$ in $\a $ is given by $\disp (\a , i) := |i - \pos (\a ,i)|$. We will say $\a $ is a \emph{$t$-band permutation} if $\disp (\a ,i) \leq t$ for all $1\leq i\leq n$. 

\begin{lemma}
\label{lem:band}
The $t$-band permutations in $S_n$ form a strong up-set.
\end{lemma}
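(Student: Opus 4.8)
The plan is to take an arbitrary $t$-band permutation $\a$, swap the entries of an inversion $\{i,j\} \in \inv(\a)$ to produce a permutation $\b$, and verify directly that $\b$ is again a $t$-band permutation. Since swapping the pair $\{i,j\}$ leaves the position of every element other than $i$ and $j$ unchanged, the displacements of all other elements are unaffected, so it suffices to bound $\disp(\b, i)$ and $\disp(\b, j)$.

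Concretely, assume $i < j$ and write $p = \pos(\a, i)$ and $q = \pos(\a, j)$. Since $\{i,j\}$ is an inversion we have $p > q$, and swapping the two entries yields $\pos(\b, i) = q$ and $\pos(\b, j) = p$, with all remaining positions fixed. The $t$-band hypothesis on $\a$ gives $|i - p| \leq t$ and $|j - q| \leq t$. The key observation is that the swap places the smaller value $i$ at the smaller position $q$ and the larger value $j$ at the larger position $p$; this ``sorting'' of both the values and their positions into the same order is exactly what lets the new displacements be controlled by combining the two orderings $i < j$ and $q < p$ with the old displacement bounds.

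In detail I would verify four one-sided inequalities. For the new displacement of $i$, the bound $i - q < j - q \leq t$ (using $i < j$) together with $q - i < p - i \leq t$ (using $q < p$) gives $\disp(\b, i) = |i - q| \leq t$. Symmetrically, for $j$, the bound $j - p < j - q \leq t$ (using $q < p$) together with $p - j < p - i \leq t$ (using $i < j$) gives $\disp(\b, j) = |j - p| \leq t$. Hence every displacement in $\b$ is at most $t$, so $\b$ is a $t$-band permutation, and the family of $t$-band permutations is closed under swapping inversions, i.e. it is a strong up-set.

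There is no substantial obstacle here: once one records that only the displacements of $i$ and $j$ change and that swapping an inversion sorts the pair of values and the pair of positions into the same order, the statement collapses to a short, case-free computation. The single point that requires care is the bookkeeping of which element occupies which position before and after the swap, ensuring that the inequalities $i < j$ and $q < p$ are each applied in the correct direction.
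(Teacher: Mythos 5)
Your proof is correct, and its skeleton is necessarily the same as the paper's: swap an inversion, observe that only the displacements of $i$ and $j$ change, and check that both new displacements remain at most $t$. The difference is in how that check is carried out. You bound each new displacement directly via four one-sided inequalities ($i-q<j-q\leq t$, $q-i<p-i\leq t$, $j-p<j-q\leq t$, $p-j<p-i\leq t$), which in effect shows $\max\big(\disp(\b,i),\disp(\b,j)\big)\leq\max\big(\disp(\a,i),\disp(\a,j)\big)$; this is case-free and fully self-contained. The paper instead records, via a case check it does not spell out, the two stronger facts that the swap does not increase (a) the \emph{sum} $\disp(\b,i)+\disp(\b,j)\leq\disp(\a,i)+\disp(\a,j)$ and (b) the \emph{absolute difference} $|\disp(\b,i)-\disp(\b,j)|\leq|\disp(\a,i)-\disp(\a,j)|$; these imply your max bound, since the maximum of two nonnegative numbers is half their sum plus half their absolute difference. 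What the paper's formulation buys is the generalisation to band-like families (Lemma \ref{lem:bandlike}): closure under operations controlled by sums and differences covers examples such as $\sum_i \disp(\a,i)\leq t$ and $\sum_i \disp(\a,i)^2\leq t$, which a bound on the maximum alone does not give. For Lemma \ref{lem:band} itself your argument suffices and is arguably cleaner, since it replaces the unstated case analysis by explicit inequalities.
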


\begin{proof}
Suppose that $\a \in S_n$ is a $t$-band permutation and that $\{i,j\} \in \inv (\a )$. Let $\b $ be the permutation obtained from $\a $ by swapping $i$ and $j$. It is clear that $\disp (\a , k)=\disp (\b , k)$ for all $k\notin \{i,j\}$. A simple case check also gives
\begin{enumerate}[(a)]
\item $\disp (\b ,i) + \disp (\b , j) \leq \disp (\a ,i) + \disp (\a , j)$, and

\item $|\disp (\b ,i)-\disp (\b ,j)|\leq |\disp (\a ,i)-\disp (\a ,j)|$.
\end{enumerate}
As $\a$ is a $t$-band permutation we have $\disp (\a ,i), \disp (\a ,j) \leq t$ and so it follows that $\disp (\b ,i), \disp (\b ,j) \leq t$, i.e. $\b $ is also a $t$-band permutation. 
\end{proof}

In fact this argument shows rather more. Given $\a \in S_n$, the \emph{displacement list} ${\bf d}(\a )$ is the vector given by:
\[
{\bf d}(\a ) := \big (\disp (\a ,1),\dots,\disp (\a ,n) \big ).
\]
Now, given a set of vectors $\mathcal{D}\subset\{0,1,\dots,n-1\}^n$, we can form the family of permutations $\A({\cal D}) : =\{\a \in S_n : {\bf d}(\a ) \in\mathcal{D}\} \subset S_n$. That is, those permutations in $S_n$ whose displacement lists lie in $\mathcal{D}$.

\begin{defn}
A set of permutations $\A$ is said to be \emph{band-like} if $
\A = \A ({\cal D})$ for some set $\mathcal{D}\subset\{0,1,\dots,n-1\}^n$ which is closed under:
\begin{itemize}
\item reordering the entries,
\item decreasing any entry,
\item replacing two entries of an element of ${\cal D}$ with new entries so that neither the sum or difference of these entries increases. 
\end{itemize}
\end{defn}

The argument of Lemma \ref{lem:band} shows that:

\begin{lemma}
\label{lem:bandlike}
Any band-like set of permutations in $S_n$ is a strong up-set.
\end{lemma}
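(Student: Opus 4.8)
The plan is to reuse the two displacement inequalities already established inside the proof of Lemma~\ref{lem:band} and feed them directly into the third closure property of $\mathcal{D}$. Concretely, suppose $\a \in \A = \A(\mathcal{D})$ and $\{i,j\} \in \inv(\a)$, and let $\b$ be the permutation obtained from $\a$ by swapping $i$ and $j$. Since $\A = \A(\mathcal{D})$, we know ${\bf d}(\a) \in \mathcal{D}$; the goal is to deduce that ${\bf d}(\b) \in \mathcal{D}$, for this gives $\b \in \A$ and hence shows $\A$ is a strong up-set.

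First I would recall the three facts extracted in the proof of Lemma~\ref{lem:band}: swapping the inversion $\{i,j\}$ leaves $\disp(\a,k)$ unchanged for every $k \notin \{i,j\}$, while $\disp(\b,i) + \disp(\b,j) \leq \disp(\a,i) + \disp(\a,j)$ (fact (a)) and $|\disp(\b,i) - \disp(\b,j)| \leq |\disp(\a,i) - \disp(\a,j)|$ (fact (b)). Read at the level of vectors, these say exactly that ${\bf d}(\b)$ is obtained from ${\bf d}(\a)$ by altering only the two coordinates indexed by $i$ and $j$, and that this alteration increases neither the sum nor the absolute difference of those two coordinates.

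This is precisely the transformation licensed by the third bullet in the definition of a band-like family. Thus I would apply that closure property to ${\bf d}(\a) \in \mathcal{D}$, replacing its $i$th and $j$th entries $\disp(\a,i), \disp(\a,j)$ by $\disp(\b,i), \disp(\b,j)$, to conclude ${\bf d}(\b) \in \mathcal{D}$ and therefore $\b \in \A$, as required. It is worth remarking that only the third closure property is used here; invariance of $\mathcal{D}$ under reordering and under decreasing entries plays no role in this particular argument (those conditions are what make band-like families a natural and flexible class, but they are not needed for the strong up-set property).

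Because the genuine content was already done in Lemma~\ref{lem:band}, I do not expect a serious obstacle. The only points requiring care are bookkeeping: one must confirm that ${\bf d}(\b)$ differs from ${\bf d}(\a)$ in at most the two coordinates $i$ and $j$, so that the two-entry replacement rule applies verbatim, and that the new entries $\disp(\b,i), \disp(\b,j)$ are genuine displacements lying in $\{0,1,\dots,n-1\}$, so that the resulting vector stays in the ambient cube on which $\mathcal{D}$ lives. The first is immediate from the invariance of the other coordinates, and the second holds simply because $\b \in S_n$.
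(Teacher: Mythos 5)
Your proof is correct and is essentially the paper's own argument: the paper proves Lemma~\ref{lem:bandlike} precisely by observing that the displacement vector ${\bf d}(\a)$ changes in only the two coordinates $i,j$ under swapping an inversion, and that facts (a) and (b) from the proof of Lemma~\ref{lem:band} say this change is exactly a two-entry replacement increasing neither sum nor difference, so the third closure property of $\mathcal{D}$ applies. Your side remark is also accurate --- only the third closure condition is invoked (indeed reordering two entries is itself a special case of it, while decreasing an entry is not needed here).
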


In addition to $t$-band permutations, examples of band-like sets include $\{ \a \in S_n : \sum_{i=1}^n \disp (\a , i)\leq t\}$ and $\{ \a \in S_n : \sum_{i=1}^n \disp (\a , i)^2\leq t\}$. 

\subsubsection*{Sequentially dominating permutations}

Our final example arises from assigning weight and thresholds as follows.
Given a sequence of real weights ${\bf w}=(w_1,\dots,w_n)$ with $w_1\geq w_2\geq\dots\geq w_n$ and thresholds ${\bf t}=(t_1,\dots, t_n)$ we consider the family 
\[
{\cal D}({\bf w},{\bf t}) := \Big \{\a = (a_1,\ldots, a_n) \in S_n : \sum_{i=1}^m w_{a_i}\geq t_m \text{ for all }m \Big \}.
\]
Since the weights are decreasing, such families are closed under swapping inversions and so form strong up-sets.\np 

Some common families arise in this way, including the families of permutations which satisfy `at least $a$ elements from $\{1,\dots,b\}$ occur among the first $c$ positions'. Indeed, such families can be written as ${\cal D}({\bf w},{\bf t})$, where
\[
{\bf w}=(\underbrace{1,\dots,1}_{b},\underbrace{0,\dots,0}_{n-b}),
\]
with $t_i =a $ if $i = c$, and $t_i =0$ otherwise.\bigskip

Many specific examples follow from these general families. For instance, 

\begin{cor}
Let ${\bf a}$ be a random permutation chosen under an independently generated probability measure on $S_n$. Then, for any $k,l,m,u,v,w\in\mathbb{N}$, any two of the following events are positively correlated:
\begin{itemize}
\item There are at most $k$ inversions in ${\bf a}$,
\item No element is displaced by more than $l$ by ${\bf a}$,
\item The sum over all elements of the displacements in ${\bf a}$ is at most $m$,
\item The first $w$ positions of ${\bf a}$ contain at least $u$ of the elements $\{1,\dots,v\}$.
\end{itemize}
\end{cor}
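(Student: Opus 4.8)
The plan is to recognise each of the four events as a strong up-set in $S_n$ and then to apply Theorem \ref{thm:strong-ig}, which already delivers $\mu(\A \cap \B) \geq \mu(\A) \cdot \mu(\B)$ for \emph{any} pair of strong up-sets $\A, \B$ under an arbitrary independently generated measure. Thus, once each of the four families is placed in the strong-up-set framework, positive correlation of every pair follows at a stroke, with no further probabilistic input required.

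First I would match each event to one of the families already treated among the examples. The event that no element is displaced by more than $l$ is precisely the set of $l$-band permutations, a strong up-set by Lemma \ref{lem:band}. The event $\sum_{i} \disp(\a, i) \leq m$ is one of the band-like families listed above (its admissible displacement lists form a set $\mathcal{D}$ closed under reordering entries, decreasing entries, and the sum/difference replacement), so it is a strong up-set by Lemma \ref{lem:bandlike}. The event that the first $w$ positions of $\a$ contain at least $u$ of $\{1,\dots,v\}$ is the sequentially dominating family ${\cal D}({\bf w},{\bf t})$ with weights ${\bf w}=(1,\dots,1,0,\dots,0)$ (with $v$ ones) and thresholds $t_w = u$ and $t_i = 0$ for $i \neq w$; since the weights are non-increasing, this is a strong up-set. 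Finally, the event of having at most $k$ inversions is the layer set ${\cal L}_{\leq k}$: as swapping the elements of an inversion can only reduce the number of inversions, this family is closed upward in the strong order and so is a strong up-set.

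With all four events identified as strong up-sets \emph{of the same order}, I would conclude by invoking Theorem \ref{thm:strong-ig}: applied to any two of them it gives $\mu(\A \cap \B) \geq \mu(\A)\cdot\mu(\B)$ for every independently generated $\mu$, which is exactly the asserted pairwise positive correlation.

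The step to watch is not a genuine obstacle but a point of care: one must check that each of the four families is upward-closed (and not downward-closed) in the strong order, since the sign of the correlation coming from the FKG-type Theorem \ref{thm:strong-ig} depends on this. The displacement and domination examples are manifestly closed upward, as recorded in Lemmas \ref{lem:band} and \ref{lem:bandlike} and in the sequentially dominating example. The case meriting explicit verification is the inversion event, where the relevant fact is that an inversion-swap strictly decreases the inversion count, so that the condition ``at most $k$ inversions'' is upward- rather than downward-closed; this is what lets all four events be handled uniformly by the single theorem.
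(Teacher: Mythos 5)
Your proposal is correct and is exactly the argument the paper intends (the paper leaves it implicit, with the corollary following immediately from the Layers, band-like, and sequentially dominating examples together with Theorem \ref{thm:strong-ig}). Your explicit check that ``at most $k$ inversions'' is \emph{upward}-closed under inversion-swaps is a worthwhile point of care, since it matches the paper's in-words description of the layer example (permutations expressible as a product of at most $\binom{n}{2}-k$ adjacent transpositions) rather than the literal formula ${\cal L}_{\geq k}$ given there.
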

\qed \vspace{2mm}

Amusingly, the families of permutations constructed in the proof of Theorem \ref{thm:weak} (our non-correlation result for weak up-sets) can be described using weights in a superficially similar way to a sequentially dominated family. Given a non-increasing sequence of weights and any thresholds, we may define the set of all permutations satisfying that the sum of all entries up to and including element $m$ is at least $t_m$ for all $m$. More precisely,
\[
{\cal D'}({\bf w},{\bf t}) := \Big \{\a = (a_1,\ldots, a_n) \in S_n : \sum_{i=1}^m w_{a_i}\geq t_{a_m} \text{ for all }m \Big\}. 
\]
In general this is not an up-set in the strong or weak sense. However, if we take weights $u_1=u_2=\dots u_k=1$, $u_{k+1}=\dots=u_n=0$ with threshold $s_k=k/2$ and weights $v_1=v_2=\dots v_k=0$, $v_{k+1}=\dots=v_n=-1$ with threshold $t_k=-k/2$ then the two families ${\cal D'}({\bf u},{\bf s})$ and ${\cal D'}({\bf v},{\bf t})$ are precisely those constructed in the proof of Theorem \ref{thm:weak}.

\subsection{Maximal chains and left-compressed up-sets}

A family of sets $\A\subset\power$ is \emph{left-compressed} if for any $1\leq i<j\leq n$, whenever $A\in\A$ with $i\not\in A,j\in A$ we also have $(A\setminus\{j\})\cup\{i\}\in\A$. See \cite{BB} for background and a number of useful applications of compressions. It is not hard to show that if $\A$ and $\B$ are left-compressed $r$-uniform families (that is, each consists of $r$-element subsets of $[n]$) then they are positively correlated in the sense that
\[
\frac{|\A\cap\B|}{\binom{n}{r}}\geq \frac{|\A|}{\binom{n}{r}}\times\frac{|\B|}{\binom{n}{r}}.
\]
However, in general left-compressed families may not be positively correlated; indeed, they may simply be disjoint if the families have different sizes. Below we use Theorem \ref{thm:strong} to give a natural measure of the similarity of non-uniform families from which positive correlation for left-compressed families follows.\np
 
A maximal chain in $\power$ is a nested sequence of sets $C_0\subset C_1\subset\dots\subset C_n$ with $C_i\subset X$ and $|C_i|=i$. A permutation $\a $ of $X$ can be thought of as a maximal chain in $\power$ by identifying $\a $ with the family of sets forming initial segments from $\a$; that is setting $C_i:=\{a_1,\dots,a_i\}$ for all $i\in [0,n]$.\np

If $\A$ is a family of sets, we write $c(\A)$ for the number of maximal chains which contain an element of $\A$. Note that if $\A$ is $r$-uniform then the probability that a uniformly random maximal chain meets $\A$ is proportional to $|\A|$ and so in this case ${c(\A)}/{n!}={|\A|}/{\binom{n}{r}}$. If $\A$ and $\B$ are families of sets then we write $c(\A,\B)$ for the number of maximal chains that meet both $\A$ and $\B$. We will use $c(\A)$ as our measure of the size of $\A$ and $c(\A,\B)$ as our measure of the intersection (or similarity) of $\A$ and $\B$. With this notion, the following Theorem can be interpreted as saying that left-compressed families are positively correlated.
 
\begin{theorem}
\label{thm:chains}
If $\A$ and $\B$ are left-compressed families from ${\cal P}(X)$ then
\[
\frac{c(\A,\B)}{n!}\geq \frac{c(\A)}{n!}\times\frac{c(\B)}{n!}.
\]
\end{theorem}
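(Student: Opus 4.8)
The plan is to reduce the statement to Theorem~\ref{thm:strong} by transporting each left-compressed set family to a family of \emph{permutations} and showing that it is a strong up-set.

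First I would set up the translation. As noted above, a permutation $\a = (a_1, \ldots, a_n) \in S_n$ corresponds to the maximal chain with initial segments $C_k(\a) := \{a_1, \ldots, a_k\}$. Given a family $\A \subset \power$, define $\widehat{\A} := \{\a \in S_n : C_k(\a) \in \A \text{ for some } k\}$, the set of permutations whose associated chain meets $\A$. Directly from the definitions of $c(\A)$ and $c(\A, \B)$ we have $c(\A) = |\widehat{\A}|$ and $c(\A, \B) = |\widehat{\A} \cap \widehat{\B}|$, so the desired inequality is exactly $|\widehat{\A} \cap \widehat{\B}|/n! \geq (|\widehat{\A}|/n!)\cdot(|\widehat{\B}|/n!)$, i.e.\ positive association of $\widehat{\A}$ and $\widehat{\B}$ under the uniform measure.

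The heart of the argument is the claim that if $\A$ is left-compressed then $\widehat{\A}$ is a strong up-set. To prove this I would take $\a \in \widehat{\A}$ and an inversion $\{i, j\} \in \inv(\a)$ with $i < j$, and let $\b$ be obtained from $\a$ by swapping $i$ and $j$ into increasing order. Writing $p = \pos(\a, j)$ and $q = \pos(\a, i)$ (so $p < q$), the permutations $\a$ and $\b$ agree except that $\b$ places $i$ in position $p$ and $j$ in position $q$. Comparing initial segments, $C_k(\b) = C_k(\a)$ whenever $k < p$ or $k \geq q$, while for $p \leq k < q$ we have $j \in C_k(\a)$, $i \notin C_k(\a)$ and $C_k(\b) = \big(C_k(\a) \setminus \{j\}\big) \cup \{i\}$. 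Since $\a \in \widehat{\A}$, some $C_k(\a) \in \A$: if this $k$ satisfies $k < p$ or $k \geq q$ then $C_k(\b) = C_k(\a) \in \A$, whereas if $p \leq k < q$ then left-compression applied to the pair $i < j$ gives $C_k(\b) \in \A$. In either case $\b \in \widehat{\A}$, so $\widehat{\A}$ is a strong up-set. The same argument applies to $\widehat{\B}$, and Theorem~\ref{thm:strong} then yields the result.

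The only nontrivial point — and hence the step I would watch most carefully — is the case analysis on initial segments in the lemma: pinning down exactly which segments change under an inversion swap and recognising that on precisely the changed segments the smaller element $i$ replaces the larger element $j$, which is exactly the configuration the hypothesis of left-compression requires. Once this identification is made the remainder is immediate. I would also keep an eye on the boundary conventions ($C_0 = \emptyset$, $C_n = X$), though these cause no difficulty since such segments fall into the ``unchanged'' range $k < p$ or $k \geq q$.
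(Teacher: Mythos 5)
Your proof is correct and takes essentially the same route as the paper's: identify maximal chains with permutations, observe that left-compression makes the corresponding permutation families strong up-sets, and apply Theorem~\ref{thm:strong} under the uniform measure. The only difference is that you verify the strong up-set claim in detail (your case analysis on which initial segments change under an inversion swap is exactly right), whereas the paper asserts it in a single sentence.
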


\begin{proof}
Let $C(\A)$ denote the set of all permutations of $X$ which correspond to chains meeting $\A$ and $C(\B)$ be the set of all permutations of $X$ which correspond to chains meeting $\B$. Since $\A$ and $\B$ are left-compressed $C(\A)$ and $C(\B)$ are strong up-sets in $S_n$. Applying Theorem \ref{thm:strong} gives the result.
\end{proof}

We remark that while the functional $c({\cal A})/n!$ is thought of as a measure $\A $, it is not a probability measure on ${\cal P}(X)$ since additivity fails (e.g. consider the partition ${\cal P}(X) = \cup _i \binom {X}{i}$).\np

A number of further variations on this result are possible (e.g. if $\A $ is left-compressed and $\B $ is right-compressed then $\A$ and $\B$ are negatively correlated). For example, given a family ${\cal F} \subset {\cal P}(X)$ and a maximal chain ${\cal C}$, let $N_{\cal F}({\cal C}):= |{\cal C} \cap {\cal F}|$. Identifying permutations $\a \in S_n$ with maximal chains as above, we obtain the following.

\begin{theorem}
\label{thm:rv-chains}
Let $\A, \B \subset {\cal P}(X)$ be left-compressed families and suppose that ${\cal C}$ is a maximal chain from ${\cal P}(X)$ chosen uniformly at random. Then for any $k,l$ we have
\[
\mathbb{P}\big (N_{\cal A}({\cal C})\geq k, N_{\B}({\cal C})\geq l\big )
	\geq
\mathbb{P}\big (N_{\A}({\cal C})\geq k\big )
	\cdot 
\mathbb{P}\big (N_{\B}({\cal C})\geq l\big ).
\] 
\end{theorem}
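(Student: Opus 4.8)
The plan is to reduce the statement to Theorem~\ref{thm:strong} by recognising the two events as strong up-sets of permutations. As in the proof of Theorem~\ref{thm:chains}, I identify a permutation $\a = (a_1,\ldots,a_n) \in S_n$ with the maximal chain whose $m$-th set is the initial segment $C_m(\a) := \{a_1,\ldots,a_m\}$, so that a uniformly random maximal chain $\mathcal{C}$ corresponds to a uniformly random $\a \in S_n$. Under this identification one has $N_\F(\mathcal{C}) = |\{m \in [0,n] : C_m(\a) \in \F\}|$ for a family $\F$, and the two events in the statement become the families
\[
\A_k := \{\a \in S_n : N_\A(\a) \ge k\}, \qquad \B_l := \{\a \in S_n : N_\B(\a) \ge l\}.
\]

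The key step will be to show that $\A_k$ and $\B_l$ are strong up-sets; for this it suffices to prove that $N_\A(\a)$ (and likewise $N_\B(\a)$) never decreases when an inversion of $\a$ is swapped. I would take $\b$ obtained from $\a$ by swapping an inversion $\{i,j\}$ with $i<j$, say $a_p = j$ and $a_q = i$ for some $p<q$, so that $b_p = i$, $b_q = j$ and $\a,\b$ agree in all other positions. First I would note that $C_m(\a) = C_m(\b)$ for $m<p$ and for $m \ge q$, since the set of the first $m$ entries is then unchanged. For the intermediate range $p \le m \le q-1$ I would check directly that $j \in C_m(\a)$, that $i \notin C_m(\a)$, and that $C_m(\b) = (C_m(\a)\setminus\{j\})\cup\{i\}$ --- that is, $C_m(\b)$ arises from $C_m(\a)$ by a single left-compression step. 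Since $\A$ is left-compressed, $C_m(\a) \in \A$ then forces $C_m(\b) \in \A$ for every such $m$, and the counts agree on the unchanged segments, so $N_\A(\b) \ge N_\A(\a)$. Running the same argument for $\B$ shows that both $\A_k$ and $\B_l$ are closed upwards under swapping inversions, hence are strong up-sets.

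With the strong up-set property established, the conclusion follows at once: applying Theorem~\ref{thm:strong} to $\A_k$ and $\B_l$ under the uniform measure on $S_n$ gives $\mathbb{P}(\A_k \cap \B_l) \ge \mathbb{P}(\A_k)\,\mathbb{P}(\B_l)$, which is exactly the desired inequality once translated back through the chain--permutation correspondence.

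I expect the only genuinely delicate point to be the verification that a single inversion swap alters each intermediate initial segment by exactly one left-compression move (and leaves the remaining segments untouched); this is what lets the left-compression hypothesis feed directly into the monotonicity of $N_\A$ and $N_\B$. Once that bookkeeping is pinned down, the strong up-set property and the final appeal to Theorem~\ref{thm:strong} are routine.
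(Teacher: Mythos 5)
Your proposal is correct and is exactly the argument the paper intends: the paper proves Theorem \ref{thm:rv-chains} by the same chain--permutation identification used for Theorem \ref{thm:chains}, observing that the events $\{N_\A(\mathcal{C}) \geq k\}$ and $\{N_\B(\mathcal{C}) \geq l\}$ correspond to strong up-sets in $S_n$ and then invoking Theorem \ref{thm:strong}. Your verification that swapping an inversion changes each intermediate initial segment by a single left-compression step (and leaves the rest unchanged) correctly fills in the bookkeeping that the paper leaves implicit.
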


\section{Open questions}

One general question is to determine which other measures on $S_n$ satisfy positive association. A particularly appealing class of measures to consider here are those given by a 1-dimensional spatial model. Spatial models of this kind are much studied in statistical physics. See \cite{BR,BU} for examples of such results.\np 

Let $x(1),x(2),\dots, x(n)\in\mathbb{R}$ with $x(1)\leq x(2)\leq\dots\leq x(n)$. We will regard these as $n$ particles placed in increasing order on the real line. A permutation $\a = (a_1,\ldots, a_n) \in S_n$ gives rise to a permutation of these particles. In this model, any point $x(i)$ is displaced by $|x(i)-x(\pos (\a , i))|$. The total displacement is $\sum_{i} |x(i) - x(\pos (\a , i))|$. We define the associated measure on $S_n$ by
\[
\P(\a )\propto q^{\sum_i |x(i)-x(\pos (\a ,i))|}. 
\]
More generally, given a non-decreasing function $V:\mathbb{R}^+\rightarrow\mathbb{R}^+$, define
\[
\P(\a )\propto  q^{\sum_{i} V(x(i)-x(\pos (\a ,i)))}. 
\]
These definitions are special cases of the well-studied Boltzmann measures in which points are picked in $\mathbb{R}^d$ and more general functions in the exponent of $q$ are allowed.\np 

We suspect that all measures defined in this way have positive association. However we do not have a proof of this, even in special cases. The following three cases all seem interesting.
\begin{q}[Equally spaced points]
Is the measure $\P$ defined by
\[
\P(\a )\propto  q^{\sum_{i} |i - \pos (\a , i)|} 
\]
positively associated?
\end{q}

This corresponds to taking $x(i)=i$ and $V(u) = |u|$.

\begin{q}[Middle gap]
Let $m(\a )=|\{ k : 1\leq k\leq n/2, n/2<a_k\leq n\}|$ be the number of elements which are moved `across the middle gap' by $\a $. Is the measure $\P$ defined by
\[
\P(\a )\propto  q^{m(\a )}
\]
positively associated?
\end{q}

This corresponds to taking $x(i) = 0$ for $i\in [\frac {n}{2}]$ and $x(i) = 1$ if $i\in [\frac {n}{2}+1,n]$ and $V(u) = 1$ if $u <0$ and $V(u) = 0$ otherwise. 

\begin{q}[Fixed points]
Let $f(\a )=|\{ k : a_k=k\}|$ be the number of fixed points of $\a $. Is the measure $\P$ defined by
\[
\P(\a )\propto  q^{n-f(\a )} 
\]
positively associated?
\end{q}

This corresponds to taking any distinct $\{x(i)\}_{i\in [n]}$ and setting $V(0) = 0$ and $V(u) = 1$ otherwise.\np

Lastly, the correlation behaviour seen in the strong and weak orders are extreme, with the first displaying  Harris--Kleitman type correlation (Theorem \ref{thm:strong}) and the second displaying worst possible correlation (Theorem \ref{thm:weak}). It seems interesting to understand how correlation behaviour emerges between these extremes. 

\begin{defn}
Given $t \in [n]$, a family of permutations $\A\subset S_n$ is a $t$\emph{-up-set} if 
given $\a \in\A$, any permutation obtained from $\a$ by swapping the elements in a pair $\{i,j\} \in \inv (\a)$  with $|\pos (\a, i) - \pos (\a ,j)| \leq t$ is also in $\A$.
\end{defn}

Note that if $t = 1$ then a $t$-up-set is simply a weak up-set. On the other hand, for $t = n$ then a $t$-up-set is a strong up-set. Thus we can think of $t$-up-sets as interpolating between the weak and strong notions as we increase $t \in [n]$. It seems natural to investigate the correlation behaviour of $t$-up-sets.

\begin{q}[Correlation for $t$-up-sets]
Given $\alpha > 0$, does there exist 
$\beta  >0$ such that the following holds: given $n\in {\mathbb N}$ and $t = \lceil \alpha n \rceil $, any two $t$-up-sets ${\cal A} , {\cal B} \subset S_n$ with $|{\cal A}|, |{\cal B}| \geq \alpha n!$ satisfy $|{\cal A} \cap {\cal B}| \geq \beta n!$.
\end{q}

\noindent {\bf Acknowledgement.} We are grateful to Ander Holroyd for bringing the Mallows measures to our attention. We would also like to thank the referees for their careful reading and helpful comments.

\end{document}